\newcommand{\E}{\mathbb{E}}
\newcommand{\R}{\mathbb{R}}
\newcommand{\N}{\mathbb{N}}
\newtheorem{theo}{Theorem}[section]
\newtheorem{rem}[theo]{Remark}
\newtheorem{propo}[theo]{Proposition}
\newtheorem{lemma}[theo]{Lemma}
\begin{document}

\title{Asymptotic preserving schemes for SDEs driven by fractional Brownian motion in the averaging regime}

\author{Charles-Edouard Br\'ehier}
\address{Univ Lyon, Université Claude Bernard Lyon 1, CNRS UMR 5208, Institut Camille Jordan, 43 blvd. du 11 novembre 1918, F-69622 Villeurbanne cedex, France}
\email{brehier@math.univ-lyon1.fr}

\date{}

\keywords{fractional Brownian motion; averaging principle; Euler schemes; asymptotic preserving schemes}
\subjclass{60H35,60H10;60G22;65C30}

\begin{abstract}
We design numerical schemes for a class of slow-fast systems of stochastic differential equations, where the fast component is an Ornstein-Uhlenbeck process and the slow component is driven by a fractional Brownian motion with Hurst index $H>1/2$. We establish the asymptotic preserving property of the proposed scheme: when the time-scale parameter goes to $0$, a limiting scheme which is consistent with the averaged equation is obtained. With this numerical analysis point of view, we thus illustrate the recently proved averaging result for the considered SDE systems and the main differences with the standard Wiener case.
\end{abstract}

\maketitle

\section{Introduction}\label{sec:intro}

Multiscale and stochastic systems propose theoretical and computational challenges in all fields of science, including for instance fluid dynamics, biology, finance and engineering. Many models are driven by the standard Wiener process, however the fractional Brownian motion~\cite{MandelbrotVanNess} is another popular model, especially in finance and turbulence modelling.

In this article, we consider stochastic differential equations (SDEs) of the following type:
\begin{equation}\label{eq:SDEintro}
\left\lbrace
\begin{aligned}
dX^\epsilon(t)&=g(X^\epsilon(t),m^\epsilon(t))d\beta^H(t),\\
dm^\epsilon(t)&=-\frac{1}{\epsilon}m^\epsilon(t)dt+\frac{\sqrt{2}}{\sqrt{\epsilon}}dB(t),
\end{aligned}
\right.
\end{equation}
with $X^\epsilon(0)=x_0$, $m^\epsilon(0)=m_0^\epsilon$, where $\epsilon$ is the time-scale separation parameter, $\bigl(B(t)\bigr)_{t\ge 0}$ is a standard Wiener process, $g:\R^2\to\R$ is a sufficiently regular mapping (see Section~\ref{sec:system} for more details), and $\bigl(\beta^H(t)\bigr)_{t\ge 0}$ is a fractional Brownian motion with Hurst index $H>1/2$, such that $B$ and $\beta^H$ are independent. Note that $m^\epsilon$ is a standard real-valued Ornstein-Uhlenbeck process. Since $H>1/2$, the evolution equation for $X^\epsilon$, {\it i.e.} the first equation in~\eqref{eq:SDEintro}, is understood in the sense of Young, see for instance~\cite{Nualart,NualartRascanu}.

The objective of this article is to build and study the behavior of numerical schemes in the regime $\epsilon\to 0$. In that regime, Hairer and Li proved recently~\cite{HairerLi} that the following averaging principle holds (see Section~\ref{sec:averaging} below): the slow component $X^\epsilon$ converges in probability to the solution $\overline{X}^{(H)}$ of the averaged equation
\begin{equation}\label{eq:averagedintro}
d\overline{X}^{(H)}(t)=\overline{g}(\overline{X}^{(H)}(t))d\beta^H(t),
\end{equation}
with initial condition $\overline{X}^{(H)}(0)=x_0$, where the averaged coefficient is defined by $\overline{g}(x)=\E_{m\sim\mathcal{N}(0,1)}[\overline{g}(x,m)]$. See also~\cite{GehringerLi,LiSieber} for other recent contributions dealing with the behavior of multiscale systems driven by fractional Brownian motion. It is worth mentioning that the expression of the averaging principle differs from the case of SDEs driven by standard Wiener processes in two directions: first, if $H=1/2$, the averaged equation is given by
\[
d\overline{X}^{(1/2)}(t)=\bigl(\overline{g^2}(\overline{X}^{(1/2)}(t))\bigr)^{\frac12}d\beta^{1/2}(t),
\]
second, if $H=1/2$, the convergence only holds in distribution in general. We refer for instance to~\cite{Hasminskii,Veretennikov} and to~\cite[Chapter~17]{PavliotisStuart} for seminal references on the averaging principle for SDEs driven by standard Wiener processes.

In this article, we revisit the averaging principle from~\cite{HairerLi} with a numerical analysis point of view: precisely we consider the notion of asymptotic preserving schemes, see the recent contribution~\cite{BRR} for stochastic systems and references therein. We retrieve the same differences between the standard and fractional Brownian motion cases at the discrete time level. Let us now describe the scheme studied in this article: let $\Delta t$ denote the time-step size, then the scheme is given by
\begin{equation}\label{eq:schemeintro}
\left\lbrace
\begin{aligned}
X_{n+1}^\epsilon&=X_n^\epsilon+g(X_n^\epsilon,m_{n+1}^\epsilon)\bigl(\beta^H(t_{n+1})-\beta^H(t_n)\bigr)\\
m_{n+1}^\epsilon&=e^{-\frac{\Delta t}{\epsilon}}m_n^\epsilon+\sqrt{1-e^{-\frac{2\Delta t}{\epsilon}}}\gamma_n,
\end{aligned}
\right.
\end{equation}
where $\bigl(\gamma_n\bigr)_{n\ge 0}$ are independent standard Gaussian random variables. Note that the Ornstein-Uhlenbeck process $m^\epsilon$ is discretized exactly in distribution: $m_n^\epsilon$ and $m^\epsilon(n\Delta t)$ are equal in distribution for all $n\ge 0$. The slow component $X^\epsilon$ is discretized using a standard Euler scheme (with an implicit treatment of the fast component). It is well-known that the scheme~\eqref{eq:schemeintro} is consistent with the system~\eqref{eq:SDEintro} when $\Delta t\to 0$, when the parameter $\epsilon$ is fixed, see for instance~\cite{HongHuangWang,HuLiuNualart,MishuraShevchenko} for the study of the Euler schemes and variants for SDEs driven by fractional Brownian motion with Hurst index $H>1/2$.

When the time-scale separation parameter vanishes, {\it i.e.} $\epsilon\to 0$, it is straightforward to check that for all $n\ge 1$, one has $m_n^\epsilon\to \gamma_n$, and $X_n^\epsilon\to X_n^0$, where the limiting scheme satisfies
\begin{equation}\label{eq:limitschemeintro}
X_{n+1}^0=X_n^0+g(X_n^0,\gamma_n)\bigl(\beta^H(t_{n+1})-\beta^H(t_n)\bigr).
\end{equation}

The main result of this article is the consistence of the limiting scheme~\eqref{eq:limitschemeintro} with the averaged equation~\eqref{eq:averagedintro}, see Theorem~\ref{th:main} below for a rigorous statement: when $\Delta t\to 0$, $X_{N}^0$ converges in probability to $\overline{X}(T)$, where $T=N\Delta t$. This result means that the scheme~\eqref{eq:schemeintro} is asymptotic preserving: the following diagram commutes when $\Delta t,\epsilon\to 0$
\begin{equation}
\begin{CD}
X_N^\epsilon     @>{\Delta t\to  0}>> X^\epsilon(T) \\
@VV{\epsilon\to 0}V        @VV{\epsilon\to 0}V\\
X_N^0     @>{\Delta t\to 0}>> \overline{X}(T).
\end{CD}
\end{equation}
In practice, this property means that the time-step size $\Delta t$ can be chosen independently of the time-scale separation parameter $\epsilon$, and that the scheme is both consistent with the model when $\epsilon$ is fixed and able to capture the limiting averaged equation. Asymptotic preserving schemes for SDEs driven by standard Wiener processes have been introduced and studied in the recent preprint~\cite{BRR}. To the best of our knowledge, they have not been studied in the case of SDEs driven by fractional Brownian motion. Our study reveals that the differences seen in the averaging principle at the continuous-time level also appear for the discretization: in the fractional Brownian motion case, the convergence holds in probability (instead of in distribution) and in the limiting equation the average of $g$ (instead of $g^2$) is computed.

Observe that proposing an asymptotic preserving scheme in a stochastic context is not trivial. Like in~\cite{BRR}, if the Ornstein-Uhlenbeck component was discretized using a implicit Euler scheme
\[
m_{n+1}^\epsilon=\frac{1}{1+\frac{\Delta t}{\epsilon}}\bigl(m_n^\epsilon+\sqrt{\frac{\Delta t}{\epsilon}}\gamma_n\bigr),
\]
the associated limiting scheme would be given by
\[
X_{n+1}=X_{n}+g(X_n,0)\bigl(\beta^H(t_{n+1})-\beta^H(t_n)\bigr)
\]
instead of~\eqref{eq:limitschemeintro}, which is not consistent with~\eqref{eq:averagedintro} in general: it is consistent only if $\overline{g}(x)=g(x,0)$ for all $x$. In addition, if the Ornstein-Uhlenbeck process $\bigl(m^\epsilon(t)\bigr)_{t\ge 0}$ is replaced by an arbitrary ergodic process, the averaging principle still holds (with an appropriate modification of the definition of $\overline{g}$), however there is no known construction of an asymptotic preserving scheme in this general case, even in the standard Brownian motion case, to the best of our knowledge.

The main result of this article is proved first for a simplified case, assuming that $g(x,m)=g(m)$ only depends on the variable $m$ -- in that case $\overline{g}$ is a constant. The proof in the simplified case is elementary, and it is provided in order to exhibit the main ideas and the main differences compared with the standard Brownian motion case. The main result is then proved in the general case, using more technical arguments.

This article is organized as follows. Section~\ref{sec:setting} presents notation and preliminary results: in particular assumptions concerning the multiscale SDE system are given in Section~\ref{sec:system}, the averaging principle from~\cite{HairerLi} is presented in Section~\ref{sec:averaging}, numerical schemes are presented in Section~\ref{sec:schemes}, and the main result, Theorem~\ref{th:main}, is stated and discussed in Section~\ref{sec:ap}. The simplified case ($g(x,m)=g(m)$) is studied in Section~\ref{sec:simple}, in particular the comparison with the standard Brownian motion case is performed in Section~\ref{sec:brown}. The general case is studied in Section~\ref{sec:general}. Section~\ref{sec:conclusion} is devoted to concluding remarks and perspectives for future works.

\section{General setting}\label{sec:setting}

To simplify notation, in this article we consider real-valued processes. Up to straightforward modifications the results are generalized to higher dimension.

\subsection{Notation}

Let $H\in(\frac12,1)$, let $\bigl(\beta^H(t)\bigr)_{t\ge 0}$ be a real-valued fractional Brownian motion with Hurst index $H$, and let $\bigl(B(t)\bigr)_{t\ge 0}$ be a standard real-valued Brownian motion. It is assumed that $\beta^H$ and $B$ are independent. Let $\mathcal{F}^H=\sigma\{\beta^H(t);~t\ge 0\}$ be the $\sigma$-field generated by the fractional Brownian motion $\beta^H$. The conditional expectation operator $\E[\cdot|\mathcal{F}^H]$ is denoted by $\E^H[\cdot]$ in the sequel.

The time-scale separation parameter is denoted by $\epsilon$, without loss of generality $\epsilon\in(0,1)$. The slow variable is denoted by $X^\epsilon$, whereas the fast variable is denoted by $m^\epsilon$.

\subsection{A criterion for convergence in probability}

In this article, convergence of random variables is understood as convergence in probability -- except in Section~\ref{sec:brown} where convergence in distribution needs to be considered.

Let us state an elementary criterion to express convergence in probability in terms of convergence of averages. We use the following convention throughout this article: a mapping $\varphi:\R\to\R$ is said to be of class $\mathcal{C}_b^K$ for some $K\in\N$ if it is bounded and $K$ times continuously differentiable, and if its derivatives of order $1,\ldots,K$ are bounded.
\begin{lemma}\label{lemma:cv_proba}
Let $(\Omega,\mathcal{F},\mathbb{P})$ be a probability space and $\mathcal{G}\subset\mathcal{F}$ be a $\sigma$-field. Let $K\in\N$.

Let $X$ be a $\mathcal{G}$-measurable real-valued random variable, and let $\bigl(X_N\bigr)_{N\in\N}$ be a sequence of real-valued random variables.

The following statements are equivalent.
\begin{enumerate}
\item[(i)] $X_N$ converges to $X$ in probability when $N\to\infty$: for all $\eta\in(0,1)$,
\[
\mathbb{P}(|X_N-X|>\eta)\underset{N\to\infty}\to 0.
\]
\item[(ii)] For any function $\varphi:\R\to \R$ of class $\mathcal{C}_b^K$, one has
\begin{equation}\label{eq:lemma_cv_proba}
\E\bigl[\big|\E[\varphi(X_N)|\mathcal{G}]-\varphi(X)\big|\bigr]\underset{N\to\infty}\to 0.
\end{equation}
\end{enumerate}
\end{lemma}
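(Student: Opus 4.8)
The plan is to prove the two implications separately, exploiting throughout that $X$ is $\mathcal{G}$-measurable, so that any bounded Borel function of $X$ is $\mathcal{G}$-measurable; in particular $\E[\varphi(X)|\mathcal{G}]=\varphi(X)$.

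For the implication (i)$\Rightarrow$(ii), fix $\varphi\in\mathcal{C}_b^K$. Using $\varphi(X)=\E[\varphi(X)|\mathcal{G}]$ I would rewrite the quantity inside the expectation as the conditional expectation $\E[\varphi(X_N)-\varphi(X)|\mathcal{G}]$, then apply the conditional Jensen inequality together with the tower property to obtain
\[
\E\bigl[\big|\E[\varphi(X_N)|\mathcal{G}]-\varphi(X)\big|\bigr]\le \E\bigl[|\varphi(X_N)-\varphi(X)|\bigr].
\]
It then remains to show that the right-hand side tends to $0$. Since $\varphi$ is continuous, the convergence $X_N\to X$ in probability forces $\varphi(X_N)\to\varphi(X)$ in probability; since $\varphi$ is bounded, the sequence $|\varphi(X_N)-\varphi(X)|$ is uniformly bounded, so convergence in probability upgrades to convergence in $L^1$. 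This direction is routine.

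The implication (ii)$\Rightarrow$(i) is the substantive one. The key step is to upgrade (ii) to a joint statement. For any bounded $\mathcal{G}$-measurable $Z$ and any $\varphi\in\mathcal{C}_b^K$, writing $\E[Z\varphi(X_N)]=\E\bigl[Z\,\E[\varphi(X_N)|\mathcal{G}]\bigr]$ and $\E[Z\varphi(X)]=\E\bigl[Z\varphi(X)\bigr]$ gives
\[
\big|\E[Z\varphi(X_N)]-\E[Z\varphi(X)]\big|\le \|Z\|_\infty\,\E\bigl[\big|\E[\varphi(X_N)|\mathcal{G}]-\varphi(X)\big|\bigr]\to 0.
\]
Choosing $Z=\psi(X)$ with $\psi\in\mathcal{C}_b^K$ (legitimate, since $\psi(X)$ is bounded and $\mathcal{G}$-measurable) yields $\E[\varphi(X_N)\psi(X)]\to\E[\varphi(X)\psi(X)]$ for all $\varphi,\psi\in\mathcal{C}_b^K$. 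Since $\cos(\xi\,\cdot)$ and $\sin(\xi\,\cdot)$ lie in $\mathcal{C}_b^\infty\subset\mathcal{C}_b^K$, applying this with $\varphi=e^{i\xi_1\cdot}$ and $\psi=e^{i\xi_2\cdot}$ shows that the characteristic function of the pair $(X_N,X)$ converges pointwise to that of $(X,X)$, so by Lévy's continuity theorem in $\R^2$ the pair $(X_N,X)$ converges in distribution to $(X,X)$. To finish, I would fix $\eta>0$, take a bounded continuous $\phi:\R\to[0,1]$ with $\phi\equiv 0$ on $[-\eta/2,\eta/2]$ and $\phi\equiv 1$ outside $(-\eta,\eta)$, and note that $(a,b)\mapsto\phi(a-b)$ is bounded and continuous, whence
\[
\mathbb{P}(|X_N-X|>\eta)\le\E[\phi(X_N-X)]\to\E[\phi(X-X)]=\phi(0)=0.
\]

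The main obstacle is precisely this second direction: one cannot recover $X_N\to X$ from a single scalar test function, because the target $X$ is itself random. The device that makes it work is the weighting by the $\mathcal{G}$-measurable variable $\psi(X)$, which converts (ii) into joint convergence of $(X_N,X)$ toward the diagonal, after which the smooth bump applied to the difference extracts the probability estimate. I would also remark that the exponent $K$ plays no genuine role, since $\mathcal{C}_b^K$ already contains the convergence-determining families (trigonometric functions and smooth bumps) used above.
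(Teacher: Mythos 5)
Your proof is correct, and while the easy direction (i)$\Rightarrow$(ii) matches the paper's argument (conditional Jensen plus bounded convergence; you even avoid the paper's use of Lipschitz continuity by invoking the continuous mapping theorem, so your version works for $K=0$ as well), your treatment of the substantive direction (ii)$\Rightarrow$(i) is genuinely different. The paper localizes: it partitions $\R$ into intervals $I_{k,\eta}$ of length $\eta/2$, builds a smooth bump $\varphi_{k,\eta}$ adapted to each interval so that the event $\{|X_N-X|>\eta\}\cap\{X\in I_{k,\eta}\}$ forces $\varphi_{k,\eta}(X_N)-\varphi_{k,\eta}(X)\ge 1$, applies (ii) to each $\varphi_{k,\eta}$, and sums over $k$ by dominated convergence. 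You instead observe that (ii) self-improves to $\E[Z\varphi(X_N)]\to\E[Z\varphi(X)]$ for bounded $\mathcal{G}$-measurable weights $Z$, take $Z=\psi(X)$, deduce pointwise convergence of the characteristic functions of $(X_N,X)$ to that of $(X,X)$, and conclude via L\'evy's continuity theorem and the standard fact that convergence in law of the pair to a diagonal law yields convergence in probability of the difference. Both arguments use only smooth bounded test functions, as required. The paper's route is more elementary and self-contained (no weak-convergence machinery, only dominated convergence over a countable partition), and it more transparently exhibits which single test function detects a given deviation event; your route is shorter and conceptually cleaner, and it isolates the real mechanism at work, namely that the $\mathcal{G}$-measurability of $X$ lets the conditional statement (ii) encode joint information about $(X_N,X)$ rather than only marginal information about $X_N$. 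Your closing remark that $K$ is irrelevant is accurate for both proofs, since the paper's bumps are also $\mathcal{C}^\infty$.
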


The proof of Lemma~\ref{lemma:cv_proba} is given in the appendix.

The expression of convergence in probability in the form of~\eqref{eq:lemma_cv_proba} is convenient for several reasons. First, it allows us to provide proofs using Taylor expansion arguments. Second, in the simplified case ($g$ does not depend on $x$) it allows us to provide an elementary proof and a comparison with the standard Brownian case. Finally, expression~\eqref{eq:lemma_cv_proba} may be appropriate to exhibit a speed of convergence, however this question goes beyond the scope of this article and is left open for future works.

\subsection{The multiscale stochastic system}\label{sec:system}

In this article, we study multiscale stochastic systems of the type
\begin{equation}\label{eq:SDE}
\left\lbrace
\begin{aligned}
dX^\epsilon(t)&=g(X^\epsilon(t),m^\epsilon(t))d\beta^H(t),\\
dm^\epsilon(t)&=-\frac{1}{\epsilon}m^\epsilon(t)dt+\frac{\sqrt{2}}{\sqrt{\epsilon}}dB(t),
\end{aligned}
\right.
\end{equation}
with initial conditions $X^\epsilon(0)=x_0$ and $m^\epsilon(0)=m_0$, which are assumed to be deterministic and independent of the parameter $\epsilon$, for simplicity. Assume that $g:\R^2\to\R$ is of class $\mathcal{C}^3$, with bounded derivatives of order $1,2,3$. Then the system~\eqref{eq:SDE} admits a unique solution, such that for all $t\ge 0$ one has
\[
\left\lbrace
\begin{aligned}
X^\epsilon(t)&=x_0+\int_{0}^{t}g(X^\epsilon(s),m^\epsilon(s))d\beta^H(s),\\
m^\epsilon(t)&=e^{-\frac{t}{\epsilon}}m_0+\sqrt{2}\int_0^te^{-\frac{t-s}{\epsilon}}dB(s).
\end{aligned}
\right.
\]
Note that $m^\epsilon$ is an Ornstein-Uhlenbeck process, for any value of $\epsilon\in(0,1)$. The stochastic integral in the $X^\epsilon$ component is interpreted as a Young integral.

In Section~\ref{sec:simple}, we study a simplified case, where $g(x,m)=g(m)$ for all $x,m\in\R^2$. The general case is studied in Section~\ref{sec:general}

\subsection{The averaging principle}\label{sec:averaging}

The goal of this section is to state the averaging principle result from~\cite{HairerLi}.

Define the averaged coefficient $\overline{g}:\R\to\R$ as follows
\[
\overline{g}(x)=\int g(x,m)d\nu(m),\quad x\in\R,
\]
where $\nu=\mathcal{N}(0,1)$ is the standard Gaussian distribution. Note that $\nu$ is the unique invariant distribution of the Ornstein-Uhlenbeck process $m^\epsilon$, for all $\epsilon\in(0,1)$, and for every fixed $t\in(0,\infty)$ and $x\in\R$, one has
\[
\E[g(x,m^\epsilon(t))]\underset{\epsilon\to 0}\to \overline{g}(x).
\]
The mapping $\overline{g}$ inherits the regularity of the mapping $g$ with respect to the $x$-variable: $\overline{g}$ is of class $\mathcal{C}^3$, with bounded derivatives of order $1,2,3$.

Let $\bigl(\overline{X}(t)\bigr)_{t\ge 0}$ be the unique solution of the averaged equation
\begin{equation}\label{eq:averaged}
d\overline{X}(t)=\overline{g}(\overline{X}(t))d\beta^H(t),
\end{equation}
with initial condition $\overline{X}(0)=x_0$. One has for all $t\ge 0$
\[
\overline{X}(t)=x_0+\int_{0}^{t}\overline{g}(\overline{X}(s))d\beta^H(s),
\]
where the stochastic integral is interpreted in the sense Young sense.

The averaging principle from~\cite{HairerLi} states that $X^\epsilon$ converges in probability to $\overline{X}$ when $\epsilon\to 0$. In this article, we consider a weaker version: for all $T\ge 0$, $X^\epsilon(T)$ converges in distribution to $\overline{X}(T)$. Owing to Lemma~\ref{lemma:cv_proba}, one has the following formulation:
\begin{equation}\label{eq:cvave}
\underset{\epsilon\to 0}\lim~\E\bigl[\big|\E^{H}[\varphi(X^\epsilon(T))]-\varphi(\overline{X}(T))\big|\bigr]=0
\end{equation}
for every function $\varphi$ of class $\mathcal{C}_b^3$.

\subsection{Numerical schemes}\label{sec:schemes}

Let us introduce the numerical scheme studied in this article. Let $T\in(0,\infty)$ and let $\Delta t$ denote the time-step size. We assume that $T=N\Delta t$ where $N\in\N$ is an integer. Set $t_n=n\Delta t$ and $\gamma_n=\Delta t^{-1/2}\bigl(B(t_{n+1})-B(t_n)\bigr)$ for all $n\ge 0$. For any values of the time-scale separation parameter $\epsilon$ and of the time-step size $\Delta t$, the numerical scheme is defined by the recursion
\begin{equation}\label{eq:scheme}
\left\lbrace
\begin{aligned}
X_{n+1}^\epsilon&=X_n^\epsilon+g(X_n^\epsilon,m_{n+1}^\epsilon)\bigl(\beta^H(t_{n+1})-\beta^H(t_n)\bigr)\\
m_{n+1}^\epsilon&=e^{-\frac{\Delta t}{\epsilon}}m_n^\epsilon+\sqrt{1-e^{-\frac{2\Delta t}{\epsilon}}}\gamma_n,
\end{aligned}
\right.
\end{equation}
with $x_0^\epsilon=x_0$ and $m_0^\epsilon=m_0$. To simplify notation, the convention $\delta\beta_n^H=\beta^H(t_{n+1})-\beta^H(t_n)$ is used below.

When $\epsilon\to 0$, with fixed time-step size $\Delta t$, it is straightforward to prove that $X_n^\epsilon\to X_n^0$ (in probability), for all $n\ge 0$, where
\begin{equation}\label{eq:limitscheme}
X_{n+1}^0=X_n^0+g(X_n^0,\gamma_n)\bigl(\beta^H(t_{n+1})-\beta^H(t_n)\bigr),
\end{equation}
with $X_0^0=x_0$.

Let us finally introduce the auxiliary scheme
\begin{equation}\label{eq:auxscheme}
\overline{X}_{n+1}=\overline{X}_n+\overline{g}(\overline{X}_n)\bigl(\beta^H(t_{n+1})-\beta^H(t_n)\bigr),
\end{equation}
for all $n\ge 0$, with $\overline{X}_0=x_0$. Note that the auxiliary scheme~\eqref{eq:auxscheme} is the standard Euler scheme with time-step size $\Delta t$ applied to the averaged equation~\eqref{eq:averaged}.

\subsection{Asymptotic preserving property}\label{sec:ap}

We are now in position to state the main result of this article.
\begin{theo}\label{th:main}
The scheme~\eqref{eq:scheme} is asymptotic preserving: the following diagram commutes
\begin{equation}\label{eq:diagramAP}
\begin{CD}
X_N^\epsilon     @>{\Delta t\to  0}>> X^\epsilon(T) \\
@VV{\epsilon\to 0}V        @VV{\epsilon\to 0}V\\
X_N^0     @>{\Delta t\to 0}>> \overline{X}(T)
\end{CD}
\end{equation}
where convergence is understood as convergence in probability, and $T=N\Delta t$, with arbitrary fixed $T\in(0,\infty)$.

The asymptotic preserving property can be rewritten as follows: for any real-valued mapping $\varphi$ of class $\mathcal{C}_b^3$, one has
\begin{equation}\label{eq:AP}
\underset{\Delta t\to 0}\lim~\underset{\epsilon\to 0}\lim~\E\bigl[\big|\E^{H}[\varphi(X_N^\epsilon)]-\varphi(\overline{X}(T))\big|\bigr]=\underset{\epsilon\to 0}\lim~\underset{\Delta t\to 0}\lim~\E\bigl[\big|\E^{H}[\varphi(X_N^\epsilon)]-\varphi(\overline{X}(T))\big|\bigr]=0.
\end{equation}
\end{theo}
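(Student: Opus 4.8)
The plan is to read the commuting diagram~\eqref{eq:diagramAP}, equivalently the two iterated limits in~\eqref{eq:AP}, as built from four one-parameter convergences, three of which are already available. The top arrow $X_N^\epsilon\to X^\epsilon(T)$ as $\Delta t\to0$ (with $\epsilon$ fixed) is the standard consistency of the Euler scheme for a Young equation, quoted from the cited references; the right arrow $X^\epsilon(T)\to\overline{X}(T)$ as $\epsilon\to0$ is exactly the averaging principle in the form~\eqref{eq:cvave}; and the left arrow $X_N^\epsilon\to X_N^0$ as $\epsilon\to0$ (with $\Delta t$ fixed) is elementary, since $e^{-\Delta t/\epsilon}\to0$ forces $m_{n+1}^\epsilon\to\gamma_n$ and then, by continuity of $g$ and induction on $n$, $X_n^\epsilon\to X_n^0$. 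With these in hand, the second iterated limit in~\eqref{eq:AP} (first $\Delta t\to0$, then $\epsilon\to0$) follows by combining the top and right arrows, passing the inner limit through $\E[|\E^H[\varphi(\cdot)]-\varphi(\overline{X}(T))|]$ by dominated convergence (using that $\varphi$ is bounded and continuous); the first iterated limit reduces, via the left arrow and the same dominated-convergence passage, to the bottom arrow. The whole difficulty is therefore concentrated in the bottom arrow, i.e. the consistency of the limiting scheme~\eqref{eq:limitscheme} with the averaged equation~\eqref{eq:averaged}: $X_N^0\to\overline{X}(T)$ in probability as $\Delta t\to0$. This is the statement I would prove in detail, first in the simplified case of Section~\ref{sec:simple} and then in general in Section~\ref{sec:general}.

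For the bottom arrow I would invoke Lemma~\ref{lemma:cv_proba} with $\mathcal{G}=\mathcal{F}^H$ (note that $\overline{X}(T)$ is $\mathcal{F}^H$-measurable), reducing the claim to $\E[|\E^H[\varphi(X_N^0)]-\varphi(\overline{X}(T))|]\to0$ for every $\varphi\in\mathcal{C}_b^3$. I would then insert the auxiliary Euler scheme~\eqref{eq:auxscheme}, which is itself $\mathcal{F}^H$-measurable, and split
\[
\E\bigl[\big|\E^H[\varphi(X_N^0)]-\varphi(\overline{X}(T))\big|\bigr]\le \E\bigl[\big|\E^H[\varphi(X_N^0)-\varphi(\overline{X}_N)]\big|\bigr]+\E\bigl[\big|\varphi(\overline{X}_N)-\varphi(\overline{X}(T))\big|\bigr].
\]
The second term vanishes because $\overline{X}_N\to\overline{X}(T)$ in probability (standard convergence of the Euler scheme~\eqref{eq:auxscheme} for the averaged Young equation, with $\overline{g}$ of class $\mathcal{C}^3$ with bounded derivatives) and $\varphi$ is bounded continuous. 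The first term is where the mechanism of averaging appears, and it is crucial that one keeps the conditional expectation $\E^H[\cdot]$ \emph{inside} rather than bounding by $\E^H[|\cdots|]$: the cancellations produced by averaging over the $\gamma_n$'s are exactly what make this term small.

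To estimate the first term I would telescope along hybrid trajectories, running the random scheme~\eqref{eq:limitscheme} for the first $j$ steps and the averaged Euler flow afterwards. Writing $\Phi_{j}^{N}$ for the flow of~\eqref{eq:auxscheme} from step $j$ to step $N$ and $Y_j:=\Phi_j^N(X_j^0)$, one has $Y_0=\overline{X}_N$, $Y_N=X_N^0$, and $\varphi(X_N^0)-\varphi(\overline{X}_N)=\sum_{j}\bigl(\varphi(Y_{j+1})-\varphi(Y_j)\bigr)$. Each increment differs only through the centered fluctuation $\rho_j:=\bigl(g(X_j^0,\gamma_j)-\overline{g}(X_j^0)\bigr)\delta\beta_j^H$, and a second-order Taylor expansion of $\varphi\circ\Phi_{j+1}^N$ around the averaged one-step point produces a first-order term carrying $\rho_j$ and a remainder controlled by $\rho_j^2$. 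Conditioning on $\sigma(\gamma_0,\dots,\gamma_{j-1})\vee\mathcal{F}^H$ and using that $\gamma_j$ is independent of this $\sigma$-field with $\E_{\gamma_j}[g(X_j^0,\gamma_j)-\overline{g}(X_j^0)]=0$, the first-order term has vanishing conditional expectation, while the remainder is bounded by $\tfrac12\|(\varphi\circ\Phi_{j+1}^N)''\|_\infty\,\mathrm{Var}_{\gamma_j}\!\bigl(g(X_j^0,\gamma_j)\bigr)(\delta\beta_j^H)^2$. Since $g$ has bounded derivatives, $\mathrm{Var}_{\gamma_j}(g(\cdot,\gamma_j))$ is bounded uniformly, so summing over $j$ leaves a bound of order $\sum_j(\delta\beta_j^H)^2$ times a flow-dependent factor; as $\E\sum_j(\delta\beta_j^H)^2=N\Delta t^{2H}=T\Delta t^{2H-1}\to0$ precisely because $H>1/2$, the first term tends to $0$. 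In the simplified case $\overline{g}$ is constant, $\Phi_{j+1}^N$ is a mere translation, $(\varphi\circ\Phi_{j+1}^N)''=\varphi''(\cdot+\mathrm{const})$ is bounded by $\|\varphi''\|_\infty$, and the argument is completely elementary; this also makes transparent the contrast with $H=1/2$ in Section~\ref{sec:brown}, where $\Delta t^{2H-1}=1$ and the fluctuations survive, producing the coefficient $\sqrt{\overline{g^2}}$ and convergence only in distribution.

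The main obstacle is the general case, namely obtaining bounds on the factor $\|(\varphi\circ\Phi_{j+1}^N)''\|_\infty$ that are uniform in $j$ and in $\Delta t$ and integrable in $\omega$. This quantity involves the first and second derivatives of the discrete averaged flow, which satisfy linearized recursions driven by $\delta\beta_j^H$; bounding them naively by $\prod_k(1+\|\overline{g}'\|_\infty|\delta\beta_k^H|)$ fails, since $\sum_k|\delta\beta_k^H|$ diverges as $\Delta t\to0$. The point is that these products must instead be controlled through the sign cancellations in the Young sums $\sum_k\overline{g}'(\overline{X}_k)\delta\beta_k^H$, which converge to the finite $\mathcal{F}^H$-measurable integral $\int_0^T\overline{g}'(\overline{X}(s))\,d\beta^H(s)$; here it is essential that the flow is built from the \emph{deterministic} (given $\mathcal{F}^H$) averaged trajectory rather than the random $X_k^0$, which is why the direct mean-square comparison of $X_N^0$ and $\overline{X}_N$ does not close. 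I would therefore establish discrete Young/H\"older stability estimates for the derivative flows, uniform in $\Delta t$, together with exponential moment bounds for the H\"older norm of $\beta^H$ (which hold by Gaussianity) to secure the required integrability, and finally combine them with $\|\sum_j(\delta\beta_j^H)^2\|_{L^2}=O(\Delta t^{2H-1})$ via Cauchy--Schwarz to conclude.
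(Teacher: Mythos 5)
Your proposal follows essentially the same route as the paper: the theorem is reduced to the consistency of the limiting scheme with the averaged equation (the paper's Proposition~\ref{propo:main}), the other three arrows being quoted; and that consistency is proved by telescoping along the hybrid quantities $\varphi(\Phi_j^N(X_j^0))$, which are exactly the paper's random functions $u_j(X_j^0)$ with $u_j=\varphi\circ\overline{X}_{j,N}(\cdot)$, with the same conditional first-order cancellation, the same $\sum_j(\delta\beta_j^H)^2=O(\Delta t^{2\alpha-1})$ remainder, and the same key technical ingredient of uniform-in-$\Delta t$ bounds on the derivatives of the discrete averaged flow via discrete Young/H\"older estimates (the paper's Lemmas~\ref{lemma:Young} and~\ref{lemma:aux}). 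The only cosmetic difference is that you close the argument by Cauchy--Schwarz with moment bounds on the H\"older norm of $\beta^H$, whereas the paper concludes pathwise and then applies dominated convergence.
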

The reformulation~\eqref{eq:AP} is due to the criterion for convergence in probability given by Lemma~\ref{lemma:cv_proba}. In order to prove Theorem~\ref{th:main}, we only need to prove that the limiting scheme is consistent with the averaged equation, {\it i.e.} that the following result holds.
\begin{propo}\label{propo:main}
Let $T\in(0,\infty)$, and let the time-step size $\Delta t$ satisfy $T=N\Delta t$, with $N\in\N$.

Let $\bigl(X_n^0\bigr)_{n\ge 0}$ be given by the limiting scheme~\eqref{eq:limitscheme}, and let $\bigl(\overline{X}_n\bigr)_{n\ge 0}$ be given by the auxiliary scheme~\eqref{eq:auxscheme}, with $X_0^0=\overline{X}_0=x_0$.

For any real-valued mapping $\varphi$ of class $\mathcal{C}_b^3$, one has
\[
\underset{\Delta t\to 0}\lim~\E\big[\big|\E^H[\varphi(X_N^0)]-\varphi(\overline{X}_N)\big|\bigr]=0.
\]
\end{propo}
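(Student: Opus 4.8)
The plan is to compare the two schemes step by step through the flow of the auxiliary scheme, exploiting that $\overline X_N$ is $\mathcal F^H$-measurable while the extra randomness in $X_N^0$ comes only from the variables $\gamma_n$, which are independent of $\mathcal F^H$. Since $\varphi(\overline X_N)$ is $\mathcal F^H$-measurable one has $\E^H[\varphi(X_N^0)]-\varphi(\overline X_N)=\E^H[\varphi(X_N^0)-\varphi(\overline X_N)]$. I would introduce the one-step maps $\overline\Psi_n(x)=x+\overline g(x)\delta\beta_n^H$ of the auxiliary scheme and the associated discrete flow $\overline\Phi_{k,N}=\overline\Psi_{N-1}\circ\cdots\circ\overline\Psi_k$, so that $\overline X_N=\overline\Phi_{k,N}(\overline X_k)$ and, crucially, $\overline\Phi_{k,N}$ is $\mathcal F^H$-measurable and does \emph{not} depend on $(\gamma_n)$. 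Writing $Y_k=\overline\Phi_{k,N}(X_k^0)$ one has $Y_N=X_N^0$ and $Y_0=\overline\Phi_{0,N}(x_0)=\overline X_N$, whence the telescoping decomposition $\varphi(X_N^0)-\varphi(\overline X_N)=\sum_{k=0}^{N-1}\bigl(\psi_k(X_k^0+g(X_k^0,\gamma_k)\delta\beta_k^H)-\psi_k(X_k^0+\overline g(X_k^0)\delta\beta_k^H)\bigr)$, where $\psi_k=\varphi\circ\overline\Phi_{k+1,N}$ is $\mathcal F^H$-measurable, using $\overline\Phi_{k,N}=\overline\Phi_{k+1,N}\circ\overline\Psi_k$ and $X_{k+1}^0=X_k^0+g(X_k^0,\gamma_k)\delta\beta_k^H$.

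Next I would Taylor expand each summand around the base point $X_k^0$ and integrate out $\gamma_k$. Let $\mathcal H_k=\mathcal F^H\vee\sigma(\gamma_0,\dots,\gamma_{k-1})$: then $X_k^0$ and $\psi_k$ are $\mathcal H_k$-measurable while $\gamma_k$ is independent of $\mathcal H_k$. The zeroth-order terms cancel, and the first-order term $\psi_k'(X_k^0)\bigl(g(X_k^0,\gamma_k)-\overline g(X_k^0)\bigr)\delta\beta_k^H$ has vanishing conditional expectation given $\mathcal H_k$, because $\E[g(x,\gamma_k)]=\overline g(x)$ for the standard Gaussian $\gamma_k$; this is precisely where the definition $\overline g(x)=\int g(x,m)d\nu(m)$ (rather than $g(x,0)$) enters, i.e. where the choice of an exact-in-law discretization of the Ornstein--Uhlenbeck component is used. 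The leading surviving contribution is the second-order term $\tfrac12\psi_k''(X_k^0)\bigl(\overline{g^2}(X_k^0)-\overline g(X_k^0)^2\bigr)(\delta\beta_k^H)^2$, together with a third-order Taylor remainder bounded by $C\,\sup|\psi_k'''|\,|\delta\beta_k^H|^3$ since $g$ and $\overline g$ are bounded.

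After applying $\E^H$ and $\E[|\cdot|]$, it remains to show that $\sum_{k=0}^{N-1}\E\bigl[|\psi_k''(X_k^0)|\,(\delta\beta_k^H)^2\bigr]$ and the analogous third-order sum tend to $0$. The mechanism is that for $H>1/2$ the quadratic variation of $\beta^H$ vanishes: $\E\bigl[\sum_k(\delta\beta_k^H)^2\bigr]=N\Delta t^{2H}=T\,\Delta t^{2H-1}\to0$, and likewise $\E\bigl[\sum_k|\delta\beta_k^H|^3\bigr]=O(\Delta t^{3H-1})\to0$. Bounding the evaluation at the random point $X_k^0$ by the $\mathcal F^H$-measurable quantity $\|\psi_k''\|_\infty$, pulling $(\delta\beta_k^H)^2$ out under $\E^H$, and pairing the two $\mathcal F^H$-measurable factors by Cauchy--Schwarz, the convergence follows provided the flow derivatives of the auxiliary scheme satisfy uniform polynomial moment bounds, e.g. $\sup_{k,N,\Delta t}\E\bigl[\|\overline\Phi_{k+1,N}'\|_\infty^4+\|\overline\Phi_{k+1,N}''\|_\infty^2\bigr]<\infty$, which together with $\varphi\in\mathcal C_b^3$ and the chain rule control $\|\psi_k''\|_\infty$ and $\|\psi_k'''\|_\infty$. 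This is exactly where $H>1/2$ is essential and the computation diverges from the Brownian case $H=1/2$, in which $\sum_k(\delta\beta_k^H)^2\to T$ and the second-order term produces $\overline{g^2}$ rather than $\overline g$.

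The main obstacle is the uniform-in-$\Delta t$ moment control of the discrete flow derivatives. Differentiating the flow gives $\overline\Phi_{k,N}'(x)=\prod_{j=k}^{N-1}\bigl(1+\overline g'(\cdot)\delta\beta_j^H\bigr)$, and the naive estimate $\prod_j(1+\|\overline g'\|_\infty|\delta\beta_j^H|)\le\exp\bigl(\|\overline g'\|_\infty\sum_j|\delta\beta_j^H|\bigr)$ is useless, since the total variation $\sum_j|\delta\beta_j^H|$ diverges as $\Delta t\to0$. The resolution is to keep the signs: on the event where all factors are positive (of overwhelming probability for small $\Delta t$), $\log\overline\Phi_{k,N}'(x)=\sum_j\log(1+\overline g'(\cdot)\delta\beta_j^H)=\sum_j\overline g'(\cdot)\delta\beta_j^H+O\bigl(\sum_j(\delta\beta_j^H)^2\bigr)$, where the signed sum is a discretized Young integral $\int\overline g'(\overline X(s))d\beta^H(s)$ controlled by the finite $p$-variation of $\beta^H$ for some $p\in(1/H,2)$ (uniformly in the starting point, since $\overline g$ has bounded derivatives), and the quadratic remainder is negligible. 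The required uniform polynomial moment bounds then follow from the integrability theory for Young/rough differential equations driven by Gaussian processes (all moments finite via Gaussian tail estimates on the $p$-variation norm), adapted to the Euler discretization. In the simplified case $g(x,m)=g(m)$ this obstacle disappears entirely: $\overline g$ is constant, $\overline\Phi_{k,N}'\equiv1$, and the whole argument collapses to the elementary $L^2$ computation $\E^H\bigl[(X_N^0-\overline X_N)^2\bigr]=(\overline{g^2}-\overline g^2)\sum_k(\delta\beta_k^H)^2$.
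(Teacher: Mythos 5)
Your decomposition is the same as the paper's: your $\psi_k=\varphi\circ\overline{\Phi}_{k+1,N}$ is exactly the paper's random function $u_{k+1}(x)=\varphi(\overline{X}_{k+1,N}(x))$, the telescoping along $Y_k=\overline{\Phi}_{k,N}(X_k^0)$ is the paper's telescoping of $\E^H[u_n(X_n^0)]$, and the cancellation of the first-order Taylor term by conditioning on $\mathcal{F}^H\vee\sigma(\gamma_0,\dots,\gamma_{k-1})$ and using $\E[g(x,\gamma_k)]=\overline{g}(x)$ is identical. Where you genuinely diverge is the control of the flow derivatives and the final passage to the limit. The paper proves (Lemma~\ref{lemma:aux}) an \emph{almost sure} bound $\sup_n\sup_x(|u_n|+|u_n'|+|u_n''|)\le\Lambda$ valid for $\Delta t\le\Delta t_0$, with $\Lambda$ and $\Delta t_0$ random, by a localized Gr\"onwall argument on discrete H\"older seminorms (Lemma~\ref{lemma:Young}); it then concludes by pathwise convergence $\sum_n|R_n|\le\Lambda T\Delta t^{2\alpha-1}\to0$ a.s.\ and dominated convergence, using only boundedness of $\varphi$. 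You instead ask for uniform-in-$\Delta t$ \emph{moment} bounds on $\|\overline{\Phi}_{k,N}'\|_\infty$ and $\|\overline{\Phi}_{k,N}''\|_\infty$ and conclude in $L^1$ by Cauchy--Schwarz. That route can be made to work, but it is strictly harder than what the statement requires: the Jacobian is an exponential of a (discretized) Young integral whose natural bound is quadratic in $\|\beta^H\|_\alpha$, so Gaussian tails of the H\"older norm do \emph{not} by themselves give finiteness of all moments --- you need the Cass--Litterer--Lyons-type localization you allude to, adapted to the Euler flow uniformly in $\Delta t$. The paper's pathwise formulation is precisely the device that makes this entire integrability question unnecessary, at the modest price of a random admissible threshold $\Delta t_0$.

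Two smaller points. First, your expansion to third order is superfluous and slightly inconsistent with your own hypotheses: the isolated second-order term $\frac12\psi_k''(X_k^0)(\overline{g^2}-\overline{g}^2)(\delta\beta_k^H)^2$ is itself $O((\delta\beta_k^H)^2)$ and can be absorbed into a Lagrange remainder controlled by $\sup|\psi_k''|$ alone (as the paper does), whereas keeping the third-order remainder forces you to control $\psi_k'''$, hence $\overline{\Phi}_{k+1,N}'''$, which your stated moment bounds do not include. Second, your closing remark on the simplified case is correct and matches the paper's elementary argument there.
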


\begin{rem}
In the simplified case (Section~\ref{sec:simple}), it is sufficient to assume that the functions $\varphi$ are of class $\mathcal{C}_b^2$.
\end{rem}

Let us provide the proof of Theorem~\ref{th:main}, assuming that Proposition~\ref{propo:main} holds.
\begin{proof}[Proof of Theorem~\ref{th:main}]
On the one hand, for fixed $\Delta t$, $X_N^\epsilon$ converges in probability to $X_N^0$ when $\epsilon\to 0$, by construction of the scheme. Note that the auxiliary scheme~\eqref{eq:auxscheme} is consistent with the averaged equation, see for instance~\cite{HongHuangWang,HuLiuNualart,MishuraShevchenko}: when $\Delta t\to 0$, $\overline{X}_N$ converges in probability to $\overline{X}(T)$. Owing to Proposition~\ref{propo:main} and to Lemma~\ref{lemma:cv_proba}, we deduce that $X_N^0$ converges to $\overline{X}(T)$ in probability when $\Delta t\to 0$.

On the other hand, for fixed $\epsilon$, the scheme~\eqref{eq:scheme} is consistent with~\eqref{eq:SDE} when $\Delta t\to 0$, in particular $X_N^\epsilon$ converges in probability to $X^\epsilon(T)$ , see for instance~\cite{HongHuangWang,HuLiuNualart,MishuraShevchenko}. Owing to the averaging principle, one has~\eqref{eq:cvave}, which means that $X^\epsilon(T)$ converges to $\overline{X}(T)$ in probability when $\epsilon\to 0$.

We thus conclude that the diagram~\eqref{eq:diagramAP} commutes, where convergence is understood as convergence in probability, thus the scheme is asymptotic preserving.
\end{proof}

It only remains to prove Proposition~\ref{propo:main}. The proof is given first in the simplified case ($g$ does not depend on $x$) in Section~\ref{sec:simple}, then in the general case in Section~\ref{sec:general}. The proof in the simplified case is elementary and is given for pedagogical reasons and to illustrate the differences with the standard Brownian motion case. The analysis of the general case requires more technical arguments.

\section{Study of the simplified problem}\label{sec:simple}

In this section, we assume that the mapping $g$ only depends on $m$: one has $g(x,m)=g(m)$ for all $(x,m)\in\R^2$. Then the averaged quantity $\overline{g}$ is a constant:
\[
\overline{g}=\int g(m)d\nu(m).
\]
In addition, note that one has $\overline{X}_n=\overline{X}(t_n)=x_0+\overline{g}\beta^H(t_n)$ for all $n\ge 0$.

\begin{rem}
In the simplified case, it is sufficient to assume that $g$ is globally Lipschitz continuous.
\end{rem}

Below, first we provide the proof of Proposition~\ref{propo:main} in this case, second we provide a comparison with the case where the fractional Brownian motion $\beta^H$ with $H>1/2$ is replaced by a standard Brownian motion $\beta$. We illustrate the two main fundamental differences: in the latter case the convergence is understood as convergence in distribution, and the averaged equation is not given by averaging $g$ -- one needs to average $g^2$.

The standard Brownian motion case is already well-understood (see for instance~\cite[Chapter~17]{PavliotisStuart} for the averaging principle and~\cite{BRR} for the design and analysis of asymptotic preserving schemes), however we provide details for pedagocial reasons -- and the presentation differs from~\cite{BRR}.

\subsection{Proof of Proposition~\ref{propo:main} in the simplified case}

\begin{proof}[Proof of Proposition~\ref{propo:main} in the simplified case]
Let us introduce a family of auxiliary random variables: for all $n\in\{0,\ldots,N\}$, set
\[
X_N^{(n)}=x_0+\sum_{k=0}^{n-1}\overline{g}\delta\beta_k^H+\sum_{k=n}^{N-1}g(\gamma_k)\delta\beta_k^H.
\]
Note that by construction, one has $X_N^0=X_N^{(0)}$ and $\overline{X}_N=X_N^{(N)}$. In addition, for all $n\in\{0,\ldots,N-1\}$, set $S_N^{(n)}=x_0+\sum_{k=0}^{n-1}\overline{g}\delta\beta_k^H+\sum_{k=n+1}^{N-1}g(\gamma_k)\delta\beta_k^H$. Then one has
\begin{align*}
X_N^{(n)}&=S_N^{(n)}+g(\gamma_n)\delta\beta_n^H\\
X_N^{(n+1)}&=S_N^{(n)}+\overline{g}\delta\beta_n^H.
\end{align*}
for all $n\in\{0,\ldots,N-1\}$.

Let $\varphi$ be of class $\mathcal{C}_b^2$ (we take $K=2$ when applying Lemma~\ref{lemma:cv_proba} in the simplified case). Observe that $\overline{X}_N$ is $\mathcal{F}^H$-measurable, thus one has $\E^H[\varphi(\overline{X}_N)]=\varphi(\overline{X}_N)$. Using a telescoping sum argument and a second-order Taylor expansion, one obtains
\begin{align*}
\E^H[\varphi(X_N^0)]-\varphi(\overline{X}_N)&=\E^H[\varphi(X_N^{(0)})]-\E^H[\varphi(X_N^{(N)})]\\
&=\sum_{n=0}^{N-1}\bigl(\E^H[\varphi(X_N^{(n)})]-\E^H[\varphi(X_N^{(n+1)})]\bigr)\\
&=\sum_{n=0}^{N-1}\bigl(\E^H[\varphi(S_N^{(n)}+g(\gamma_n)\delta\beta_n^H)]-\E^H[\varphi(S_N^{(n)}+\overline{g}\delta\beta_n^H)]\bigr)\\
&=\sum_{n=0}^{N-1}\E^H[\varphi'(S_N^{(n)})(g(\gamma_n)-\overline{g})]\delta\beta_n^H+\sum_{n=0}^{N-1}{\rm O}(|\delta\beta_n^H|^2).
\end{align*}
On the one hand, since the random variables $\bigl(\gamma_n\bigr)_{0\le n\le N-1}$ are independent, and are independent of $\beta^H$, one has
\[
\E^H[\varphi'(S_N^{(n)})(g(\gamma_n)-\overline{g})]=\E^H[\varphi'(S_N^{(n)})]\E[g(\gamma_n)-\overline{g}]=0
\]
by definition of $\overline{g}$. Indeed, observe that $S_N^{(n)}$ only depends on $\gamma_k$ with $k\neq n$, and on $\beta^H$.

On the other hand, since the Hurst index satisfies $H>1/2$, one has
\[
\sum_{n=0}^{N-1}\E[|\delta\beta_n^H|^2]={\rm O}\bigl(\Delta t^{2H-1}\bigr)\underset{\Delta t\to 0}\to 0.
\]
Gathering the estimates then gives the required convergence result: for all functions $\varphi$ of class $\mathcal{C}_b^2$, one has
\[
\E\bigl[\big|\E^{H}[\varphi(X_N^0)]-\varphi(\overline{X}_N)\big|\bigr]\underset{\Delta t\to 0}\to 0
\]
which concludes the proof of Proposition~\ref{propo:main} in the simplified case.
\end{proof}

\begin{rem}
The proof above provides a rate of convergence $2H-1$, which is consistent with the rate of convergence of the standard Euler scheme for SDEs driven by fractional Brownian motion.
\end{rem}

\subsection{Comparison with the standard Brownian Motion case}\label{sec:brown}

The objective of this section is to provide a comparison with the situation where the fractional Brownian motion $\beta^H$ is replaced by a standard Brownian motion $\beta$ (independent of $B$). We thus consider the system
\begin{equation}\label{eq:SDE_brown}
\left\lbrace
\begin{aligned}
dX^\epsilon(t)&=g(m^\epsilon(t))d\beta(t),\\
dm^\epsilon(t)&=-\frac{1}{\epsilon}m^\epsilon(t)dt+\frac{\sqrt{2}}{\sqrt{\epsilon}}dB(t).
\end{aligned}
\right.
\end{equation}
The associated numerical scheme is defined by
\begin{equation}\label{eq:scheme_brown}
\left\lbrace
\begin{aligned}
X_{n+1}^\epsilon&=X_n^\epsilon+g(m_{n+1}^\epsilon)\bigl(\beta(t_{n+1})-\beta(t_n)\bigr)\\
m_{n+1}^\epsilon&=e^{-\frac{\Delta t}{\epsilon}}m_n^\epsilon+\sqrt{1-e^{-\frac{2\Delta t}{\epsilon}}}\gamma_n,
\end{aligned}
\right.
\end{equation}
with initial conditions $X^\epsilon(0)=X_0^\epsilon=x_0$ and $m^\epsilon(0)=m_0^\epsilon=m_0$.

On the one hand, in that setting the averaging principle holds as follows: $X^\epsilon$ converges in distribution to $\overline{X}$ defined by
\[
\overline{X}(t)=x_0+\bigl(\overline{g^2}\bigr)^{\frac12}\beta(t)
\]
with $\overline{g^2}=\int g^2d\nu$. Note that in general $\overline{g^2}>\overline{g}^2$.

On the other hand, for fixed $\Delta t>0$, one has the convergence result (in probability) $X_n^\epsilon\to X_n^0$ for all $n\in\{0,\ldots,N\}$, where
\[
X_{n+1}^0=X_n^0+g(\gamma_n)\bigl(\beta(t_{n+1})-\beta(t_n)\bigr).
\]
The scheme~\eqref{eq:scheme_brown} is asymptotic preserving, when convergence is understood in distribution.
\begin{propo}
The limiting scheme is consistent, for convergence in distribution, with the averaged equation. More precisely, let $T\in(0,\infty)$ and let $\varphi$ be of class $\mathcal{C}_b^3$. Then
\[
\underset{\Delta t\to 0}\lim~\underset{\epsilon\to 0}\lim~\E[\varphi(X_N^\epsilon)]=\underset{\epsilon\to 0}\lim~\underset{\Delta t\to 0}\lim~\E[\varphi(X_N^\epsilon)]=\E[\varphi(\overline{X}(T))].
\]
\end{propo}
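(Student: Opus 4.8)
The plan is to follow the same telescoping strategy as in the simplified fractional case, the only genuinely new ingredient being the consistency of the limiting scheme; both halves of the commuting diagram then follow from results already available. For the composition $\lim_{\epsilon\to 0}\lim_{\Delta t\to 0}$, the inner limit $\lim_{\Delta t\to 0}\E[\varphi(X_N^\epsilon)]=\E[\varphi(X^\epsilon(T))]$ is the standard consistency of the Euler scheme \eqref{eq:scheme_brown} for fixed $\epsilon$, and the outer limit is the stated averaging principle $X^\epsilon(T)\to\overline X(T)$ in distribution. For the composition $\lim_{\Delta t\to 0}\lim_{\epsilon\to 0}$, the inner limit gives $\lim_{\epsilon\to 0}\E[\varphi(X_N^\epsilon)]=\E[\varphi(X_N^0)]$ since $X_N^\epsilon\to X_N^0$ in probability and $\varphi$ is bounded continuous. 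Thus everything reduces to proving $\lim_{\Delta t\to 0}\E[\varphi(X_N^0)]=\E[\varphi(\overline X(T))]$. I would introduce the auxiliary Euler scheme $\overline X_{n+1}=\overline X_n+(\overline{g^2})^{1/2}\delta\beta_n$ with $\delta\beta_n=\beta(t_{n+1})-\beta(t_n)$, so that $\overline X_N=x_0+(\overline{g^2})^{1/2}\beta(T)=\overline X(T)$ exactly, and aim at $\E[\varphi(X_N^0)]-\E[\varphi(\overline X_N)]\to 0$.

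Next I would set up the interpolating family exactly as before: for $n\in\{0,\dots,N\}$ put $X_N^{(n)}=x_0+\sum_{k=0}^{n-1}(\overline{g^2})^{1/2}\delta\beta_k+\sum_{k=n}^{N-1}g(\gamma_k)\delta\beta_k$, so that $X_N^{(0)}=X_N^0$ and $X_N^{(N)}=\overline X_N$, and write $S_N^{(n)}=x_0+\sum_{k=0}^{n-1}(\overline{g^2})^{1/2}\delta\beta_k+\sum_{k=n+1}^{N-1}g(\gamma_k)\delta\beta_k$, giving $X_N^{(n)}=S_N^{(n)}+g(\gamma_n)\delta\beta_n$ and $X_N^{(n+1)}=S_N^{(n)}+(\overline{g^2})^{1/2}\delta\beta_n$. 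The crucial difference with the fractional argument is that here I take full expectations rather than conditioning on $\mathcal F^H$: I telescope and Taylor-expand to second order, using that $\delta\beta_n$ is independent of $(S_N^{(n)},\gamma_n)$ with $\E[\delta\beta_n]=0$ and $\E[\delta\beta_n^2]=\Delta t$, and that $\gamma_n$ is independent of $S_N^{(n)}$. The first-order term vanishes after integrating over $\beta$ because $\E[\delta\beta_n]=0$; the second-order term equals $\tfrac12\Delta t\,\E[\varphi''(S_N^{(n)})]\,\E[g(\gamma_n)^2-\overline{g^2}]$, which vanishes after integrating over $B$ precisely by the definition $\overline{g^2}=\int g^2\,d\nu$.

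The heart of the matter, and the clean contrast with the fractional case, is why the expansion must be pushed to second order and why one averages $g^2$ rather than $g$. In the fractional case the residual sum $\sum_n\E[|\delta\beta_n^H|^2]=\mathrm{O}(\Delta t^{2H-1})$ tends to $0$, so stopping at first order (conditionally on $\mathcal F^H$, which forces the comparison coefficient to be $\overline g$) suffices. Here $\sum_n\E[\delta\beta_n^2]=N\Delta t=T$ does not vanish, so the second-order term cannot be discarded; it can only be killed by a cancellation that requires the comparison coefficient $c$ to satisfy $c^2=\overline{g^2}$, i.e.\ $c=(\overline{g^2})^{1/2}$, and this cancellation lives at the level of expectations, which is exactly why convergence holds only in distribution and not in probability. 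Once the first- and second-order terms are removed, the remaining third-order Taylor remainders are controlled, using boundedness of $\varphi'''$ together with $\E[|\delta\beta_n|^3]=\mathrm{O}(\Delta t^{3/2})$ and $\E[|g(\gamma_n)|^3]<\infty$, by $\sum_{n=0}^{N-1}\mathrm{O}(\Delta t^{3/2})=N\,\mathrm{O}(\Delta t^{3/2})=\mathrm{O}(\Delta t^{1/2})\to 0$; this is where the hypothesis $\varphi\in\mathcal C_b^3$ (rather than $\mathcal C_b^2$) enters. The main obstacle is therefore conceptual rather than computational: recognizing that the non-vanishing quadratic variation of $\beta$ must be absorbed through a distributional cancellation of the second-order term, which simultaneously explains the appearance of $\overline{g^2}$ and the weakening from convergence in probability to convergence in distribution.
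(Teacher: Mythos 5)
Your proposal is correct and follows essentially the same route as the paper: the same interpolating family $X_N^{(n)}$, $S_N^{(n)}$, the same telescoping-plus-Taylor argument with the first-order term killed by $\E[\delta\beta_n]=0$, the second-order term killed by $\E[g^2(\gamma_n)]=\overline{g^2}$, and the third-order remainder of size $\mathrm{O}(\Delta t^{1/2})$. Your version is in fact slightly cleaner in two minor respects --- you take full expectations from the outset rather than conditioning first, and you correctly write $\varphi''(S_N^{(n)})$ in the second-order term where the paper's display has $\varphi'$ --- but these do not change the argument.
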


The result above is an immediate consequence of~\cite[Theorem~3.7]{BRR}. However, for pedagogical reasons, we provide a proof of the consistency of the limiting scheme with the averaged equation, using the same approach as in the proof of Proposition~\ref{propo:main} above in the simplified case. This allows us to give a comparison of the standard and fractional Brownian motion cases.

\begin{proof}
As in the proof of Proposition~\ref{propo:main} (in the simplified case) above, introduce the following family of random variables: for all $n\in\{0,\ldots,N\}$ define
\[
X_N^{(n)}=x_0+\sum_{k=0}^{n-1}\bigl(\overline{g^2}\bigr)^{\frac12}\delta\beta_k+\sum_{k=n}^{N-1}g(\gamma_k)\delta\beta_k.
\]
Note that by construction, one has $X_N^0=X_N^{(0)}$ and $\overline{X}(t_N)=X_N^{(N)}$. In addition, for all $n\in\{0,\ldots,N-1\}$, set $S_N^{(n)}=x_0+\sum_{k=0}^{n-1}\bigl(\overline{g^2}\bigr)^{\frac12}\delta\beta_k+\sum_{k=n+1}^{N-1}g(\gamma_k)\delta\beta_k$. Then one has
\begin{align*}
X_N^{(n)}&=S_N^{(n)}+g(\gamma_n)\delta\beta_n\\
X_N^{(n+1)}&=S_N^{(n)}+\bigl(\overline{g^2}\bigr)^{\frac12}\delta\beta_n.
\end{align*}
Let $\varphi$ be of class $\mathcal{C}_b^3$. Using a telescoping sum argument and a third-order Taylor expansion, one obtains
\begin{align*}
\E^H[\varphi(X_N^0)]-\varphi(\overline{X}(t_N))&=\E^H[\varphi(X_N^{(0)})]-\E^H[\varphi(X_N^{(N)})]\\
&=\sum_{n=0}^{N-1}\bigl(\E^H[\varphi(X_N^{(n)})]-\E^H[\varphi(X_N^{(n+1)})]\bigr)\\
&=\sum_{n=0}^{N-1}\bigl(\E^H[\varphi(S_N^{(n)}+g(\gamma_n)\delta\beta_n)]-\E^H[\varphi(S_N^{(n)}+\bigl(\overline{g^2}\bigr)^{\frac12}\delta\beta_n)]\bigr)\\
&=\sum_{n=0}^{N-1}\E^H[\varphi'(S_N^{(n)})\bigl(g(\gamma_n)-\bigl(\overline{g^2}\bigr)^{\frac12}\bigr)\delta\beta_n]\\
&\quad+\frac12\sum_{n=0}^{N-1}\E^H[\varphi'(S_N^{(n)})(g^2(\gamma_n)-\overline{g^2})\delta\beta_n^2]\\
&\quad+\sum_{n=0}^{N-1}{\rm O}(|\delta\beta_n|^3).
\end{align*}
The first order term vanishes in expectation, since the increments of the standard Brownian motion $\beta$ are independent. Indeed, observe that the random variables $S_N^{(n)}$, $\gamma_n$ and $\delta\beta_n$ are independent: for all $n\in\{0,\ldots,N-1\}$
\[
\E\bigl[\E^H[\varphi'(S_N^{(n)})(g(\gamma_n)-\bigl(\overline{g^2}\bigr)^{\frac12})\delta\beta_n]\bigr]=\E[\varphi'(S_N^{(n)})]\E[g(\gamma_n)-\bigl(\overline{g^2}\bigr)^{\frac12}]\E[\delta\beta_n]=0,
\]
using $\E[\delta\beta_n]=0$.

The second order term vanishes almost surely by the definition of the averaged coefficient $\overline{g^2}$: for all $n\in\{0,\ldots,N-1\}$, using the independence of the increments of the standard Brownian motion $\beta$ one has
\[
\E^H[\varphi'(S_N^{(n)})(g^2(\gamma_n)-\overline{g^2})\delta\beta_n^2]=\E^H[\varphi'(S_N^{(n)})]\E[g^2(\gamma_n)-\overline{g^2}]\Delta t=0.
\]
Finally, the last term satisfies $\sum_{n=0}^{N-1}\E|\delta\beta_n|^3={\rm O}\bigl(\Delta t^{\frac12}\bigr)\underset{\Delta t\to 0}\to 0$.

Gathering the estimates gives the required convergence result
\[
\underset{\Delta t\to 0}\lim~\big|\E[\varphi(X_N^0)]-\E[\varphi(\overline{X}(T))]\big|=0
\]
and concludes the proof.
\end{proof}
In the proof above, we can exhibit the two main differences between the fractional and standard Brownian motion case. In the latter case, the first order terms of the Taylor expansion only vanishes in expectation, hence the need to consider convergence in distribution. In addition, the averaging procedure is only visible in the second order terms of the Taylor expansion, hence a different expression of the averaged coefficient, whereas it is visible in the first order term in the fractional Brownian motion case.

\section{Study of the general case}\label{sec:general}

The analysis of the general case requires more involved techniques, compared with the simplified case. We first state the useful auxiliary results in Section~\ref{sec:gen_aux}, and give the proof of Proposition~\ref{propo:main} in Section~\ref{sec:gen_proof}. The proof of an auxiliary result stated in Section~\ref{sec:gen_aux} is given in Section~\ref{sec:gen_lemma}.

\subsection{Auxiliary results}\label{sec:gen_aux}

Let us state the main auxiliary results which are used in Section~\ref{sec:gen_proof} below to prove Proposition~\ref{propo:main}.

Let $T\in(0,\infty)$ be fixed. For all $\alpha\in(0,1)$, if $f:[0,T]\to\R$ is a real-valued function, set
\[
\|f\|_{\alpha}=\underset{0\le t_1<t_2\le T}\sup~\frac{|f(t_2)-f(t_1)|}{|t_2-t_1|}.
\]
Recall that $f$ is $\alpha$-H\"older continuous if $\|f\|_{\alpha}<\infty$.

First, the trajectories of a fractional Brownian motion $\bigl(\beta^{H}(t)\bigr)_{0\le t\le T}$ with Hurst index $H$ are $\alpha$-H\"older continuous, for all $\alpha\in(0,H)$. More precisely, for all $\alpha\in(0,H)$, there exists an almost surely finite random variable $C_\alpha$, such that
\begin{equation}\label{eq:holder_betaH}
\|\beta^H\|_{\alpha}\le C_\alpha,
\end{equation}
moreover $\E[C_\alpha^m]<\infty$ for all $m\in\N$. The property~\eqref{eq:holder_betaH} is a consequence of the Kolmogorov regularity criterion, using the equality $\E[|\beta^H(t_2)-\beta^H(t_1)|^2]=|t_2-t_1|^{2H}$ and the fact that $\beta^H$ is a Gaussian process.

Next, in order to study discrete-time processes, it is convenient to introduce the following variant of the H\"older semi-norms. Let $\Delta t$ denote the time-step size, with the condition $T=N\Delta t$ for some $N\in\N$. For every $t\in[0,T]$, let $\ell(t)\in[0,T]$ be defined by $\ell(t)=n\Delta t$ for all $t\in[n\Delta t,(n+1)\Delta t)$, $n\in\{0,\ldots,N-1\}$ and $\ell(T)=T$. To simplify notation, we omit the dependence of $\ell$ with respect to $\Delta t$. For all $\alpha\in(0,1)$ and $0\le a\le b\le T$, if $f:[0,T]\to\R$ is a real-valued function, set
\[
\|f\|_{a,b,\alpha,\Delta t}=\underset{a\le t_1<t_2\le b; \ell(t_1)=t_1}\sup~\frac{|f(t_2)-f(t_1)|}{|t_2-t_1|}.
\]
Observe that the only change in the definition is the condition $\ell(t_1)=t_1$. The dependence with respect to the left and right hand points of the interval is also made explicit (we only need $a=0$ and $b=T$ for the standard version $\|\cdot\|_{\alpha}$).

In the sequel, we employ the following result to estimate Young integrals.
\begin{lemma}\label{lemma:Young}
Let $z=\big(z(t)\bigr)_{0\le t\le T}$ be a $\alpha$-H\"older continuous real-valued function with $\alpha\in(0,1)$. Let $M\in\N$, and let $F:\R^M\to\R$ be a continuously differentiable function.

Assume that real-valued mappings $y_1,\ldots,y_M$ satisfy $\|y_m\|_{\alpha',\Delta t}<\infty$ for all $m=1,\ldots,M$, for some $\alpha'\in(0,1)$ such that $\alpha+\alpha'>1$.

Then there exists $C\in(0,\infty)$, which does not depend on $\Delta t$, such that the following holds: for all $s,t\in[0,T]$ such that $\ell(s)=s$, one has
\begin{equation}
\begin{aligned}
\big|\int_s^t F(y(\ell(r))dz(r)\big|&\le C\underset{s\le r\le t}\sup~|F(y(\ell(r))| \|z\|_{\alpha}(t-s)^{\alpha}\\
&+\sum_{m=1}^{M}\underset{s\le r\le t}\sup~|\partial_{y_m}F(y(\ell(r))| \|y_m\|_{s,t,\alpha',\Delta t}\|z\|_{\alpha}(t-s)^{\alpha+\alpha'},
\end{aligned}
\end{equation}
where to simplify notation $F(y(\ell(r))=F(y_1(\ell(r)),\ldots,y_M(\ell(r)))$.
\end{lemma}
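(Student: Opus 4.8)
The plan is to prove Lemma~\ref{lemma:Young} using the standard Young/sewing estimate, but tracking the discretization carefully because the integrand is evaluated at the discretized time $\ell(r)$ rather than at $r$. The key observation is that the integrand $r\mapsto F(y(\ell(r)))$ is piecewise constant on each subinterval $[n\Delta t,(n+1)\Delta t)$, so the Young integral against $z$ reduces on each such piece to the product of the (constant) integrand value and an increment of $z$. This means I never need the full Young integral against a genuinely $\alpha'$-H\"older integrand on the small scale; the discretization acts as a regularization.

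First I would set up notation: fix $s,t$ with $\ell(s)=s$, so $s=n_0\Delta t$ for some integer $n_0$. Let $n_1$ be the index with $\ell(t)=n_1\Delta t$. Since $F(y(\ell(\cdot)))$ is constant equal to $F(y(n\Delta t))$ on $[n\Delta t,(n+1)\Delta t)$, the Young integral decomposes as
\begin{equation*}
\int_s^t F(y(\ell(r)))\,dz(r)=\sum_{n=n_0}^{n_1-1}F(y(n\Delta t))\bigl(z((n+1)\Delta t)-z(n\Delta t)\bigr)+F(y(n_1\Delta t))\bigl(z(t)-z(n_1\Delta t)\bigr).
\end{equation*}
The second step is the crucial one: rewrite this sum using an Abel summation (summation by parts) so as to replace each increment of $z$ by the full increment $z(t)-z(s)$ plus correction terms involving increments of the integrand. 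Concretely, writing $F_n=F(y(n\Delta t))$, Abel summation produces a leading term $F_{n_0}(z(t)-z(s))$ plus a sum $\sum_n (F_{n+1}-F_n)(z(t)-z((n+1)\Delta t))$ of products of integrand-increments and $z$-increments. The leading term is bounded by $\sup_{s\le r\le t}|F(y(\ell(r)))|\,\|z\|_\alpha (t-s)^\alpha$, giving the first term in the claimed bound.

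For the correction sum I would bound $|F_{n+1}-F_n|$ via the mean value theorem by $\sum_{m=1}^M \sup|\partial_{y_m}F|\,|y_m((n+1)\Delta t)-y_m(n\Delta t)|$, where each factor is controlled by $\|y_m\|_{s,t,\alpha',\Delta t}(\Delta t)^{\alpha'}$ (here the restriction $\ell(t_1)=t_1$ in the discrete semi-norm is exactly what makes this estimate available, since $n\Delta t$ is a grid point). The matching $z$-increment satisfies $|z(t)-z((n+1)\Delta t)|\le \|z\|_\alpha (t-s)^\alpha$. Summing over the at most $(t-s)/\Delta t$ terms and using $(\Delta t)^{\alpha'}\cdot\tfrac{t-s}{\Delta t}\le (t-s)^{\alpha'}$ when $\alpha'\le 1$ then yields the second term of order $(t-s)^{\alpha+\alpha'}$, with the $C$ coming from the summation-by-parts bookkeeping and absorbing constants independent of $\Delta t$. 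The main obstacle, and the place requiring the most care, is keeping the discrete H\"older semi-norm $\|y_m\|_{s,t,\alpha',\Delta t}$ rather than the continuous one throughout: one must check that every integrand difference that appears is between two grid points so that the left-endpoint condition $\ell(t_1)=t_1$ holds, and that the counting of terms produces the exponent $\alpha+\alpha'$ uniformly in $\Delta t$. A cleaner alternative would be to invoke the classical Young inequality directly on the piecewise-constant integrand and then verify that its H\"older-type norm is controlled by the discrete semi-norm, but the explicit summation-by-parts argument above is the most transparent and makes the $\Delta t$-independence of $C$ manifest.
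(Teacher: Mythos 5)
The paper does not actually prove this lemma itself: it refers to \cite[Lemma A.1]{HuLiuNualart}. Your reduction of the integral to a finite Riemann--Stieltjes sum (using that $F(y(\ell(\cdot)))$ is piecewise constant) and the Abel summation producing the leading term $F(y(s))\bigl(z(t)-z(s)\bigr)$ are both correct, and that leading term does give the first term of the claimed bound. The gap is in your estimate of the correction sum. You bound each of the roughly $(t-s)/\Delta t$ terms by $C\,\Delta t^{\alpha'}\|z\|_\alpha(t-s)^\alpha$ and then invoke the inequality $(\Delta t)^{\alpha'}\cdot\frac{t-s}{\Delta t}\le(t-s)^{\alpha'}$. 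This inequality is reversed: it is equivalent to $t-s\le\Delta t$, whereas in the relevant regime $t-s=(n_1-n_0)\Delta t\gg\Delta t$. Writing $\Delta t=(t-s)/K$, your bound on the correction sum is of order $\|z\|_\alpha(t-s)^{\alpha+\alpha'}K^{1-\alpha'}$, which diverges as $\Delta t\to0$ with $t-s$ fixed, so the constant would not be independent of $\Delta t$. A telltale sign is that your argument never uses the hypothesis $\alpha+\alpha'>1$, only $\alpha'\le1$; a term-by-term bound after a single global summation by parts cannot yield a Young-type estimate.

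The standard way to close this gap is the Young--Lo\`eve successive point-removal (equivalently, sewing) argument: compare the Riemann--Stieltjes sum over the grid $\{s=t_{n_0},\dots,t_{n_1},t\}$ with the one-point approximation $F(y(s))\bigl(z(t)-z(s)\bigr)$ by deleting interior grid points one at a time. Deleting $t_k$ changes the sum by $\bigl(F(y(t_{k-1}))-F(y(t_k))\bigr)\bigl(z(t_{k+1})-z(t_k)\bigr)$, and choosing at each stage a point with $t_{k+1}-t_{k-1}\le 2(t-s)/(p-1)$ (where $p$ is the current number of subintervals) gives a contribution of order $\bigl((t-s)/p\bigr)^{\alpha+\alpha'}$ times the product of semi-norms; summing over $p$ yields a factor $\zeta(\alpha+\alpha')$, finite precisely because $\alpha+\alpha'>1$. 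This argument applies verbatim here because only increments of $y_m$ between grid points ever appear, so the discrete semi-norm $\|y_m\|_{s,t,\alpha',\Delta t}$ suffices --- which is the actual content of the lemma. Note also that your proposed ``cleaner alternative'' of applying the classical Young inequality directly to the piecewise-constant integrand does not work as stated: that integrand is discontinuous, hence not H\"older continuous, which is exactly why the paper introduces the modified semi-norm and cites the discrete-integrand variant of \cite{HuLiuNualart} rather than the classical estimate.
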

Lemma~\ref{lemma:Young} is a variant of~\cite[Lemma A.1]{HuLiuNualart}, where the dependence with respect to $\|y_m\|_{s,t\alpha',\Delta t}$ when $m$ varies is made more explicit. This is instrumental in the proof of Lemma~\ref{lemma:aux} below, where bounds for derivatives are proved succesively. We refer to~\cite{HuLiuNualart} for the proof. Note that the standard inequalities for Young integrals, considering the standard H\"older semi-norm $\|\cdot\|_{\alpha'}$ cannot be applied, since $r\mapsto y(\ell(r))$ is piecewise constant, and thus in general is not (H\"older) continuous.

Let us now introduce a generalized version of the auxiliary scheme~\eqref{eq:auxscheme}: for all $n\in\{0,\ldots,N\}$, $k\in\{n,\ldots,N-1\}$ and $x\in\R$, set
\begin{equation}\label{eq:auxscheme_bis}
\begin{aligned}
\overline{X}_{n,k+1}(x)&=\overline{X}_{n,k}(x)+\overline{g}(\overline{X}_{n,k}(x))\delta\beta_k^H\\
\overline{X}_{n,n}(x)&=x.
\end{aligned}
\end{equation}
The definition above is indeed a generalization of~\eqref{eq:auxscheme}: one has $\overline{X}_n=\overline{X}_{0,n}(x_0)$ for all $n\in\{0,\ldots,N\}$.

Finally, let us introduce the auxiliary functions $u_n$, for $n\in\{0,\ldots,N\}$, as follows. Given a real-valued mapping $\varphi:\R\to \R$ of class $\mathcal{C}_b^3$, define
\begin{equation}\label{eq:u}
u_n(x)=\varphi(\overline{X}_{n,N}(x))
\end{equation}
for all $n\in\{0,\ldots,N\}$ and $x\in\R$. To simplify notation, the dependence of $u_n$ with respect to the time-step size $\Delta t$ is omitted. Note that $u_N=\varphi$. Observe that the $u_n$'s are random functions. They satisfy the following property: for all $n\in\{0,\ldots,N-1\}$ and all $x\in\R$, one has
\begin{equation}\label{eq:prop_u}
u_n(x)=u_{n+1}\bigl(x+\overline{g}(x)\delta\beta_n^H\bigr).
\end{equation} 
Indeed, by construction $\overline{X}_{n+1,N}\bigl(x+\overline{g}(x)\delta\beta_n^H\bigr)=\overline{X}_{n,N}(x)$.

We are now in position to state the main auxiliary result of this section.
\begin{lemma}\label{lemma:aux}
Assume that $\varphi$ is of class $\mathcal{C}_b^3$. There exist an almost surely finite random variable $\Lambda$, and an almost surely positive random variable $\Delta t_0\le T$, such that for all $\Delta t\in(0,\Delta t_0)$ one has
\begin{equation}\label{eq:lemmaaux}
\underset{0\le n\le N}\sup~\underset{x\in\R}\sup~\Bigl(|u_n(x)|+|u_n'(x)|+|u_n''(x)|\Bigr)\le \Lambda.
\end{equation}
\end{lemma}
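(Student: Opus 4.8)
The plan is to prove the bound \eqref{eq:lemmaaux} by establishing uniform-in-$\Delta t$ control on the flow map $x\mapsto \overline{X}_{n,N}(x)$ and its first two spatial derivatives, and then transferring these bounds to $u_n=\varphi\circ\overline{X}_{n,N}$ via the chain rule. Since $\varphi$ is of class $\mathcal{C}_b^3$, we immediately get $|u_n(x)|\le\|\varphi\|_\infty$, so the only real work is controlling $u_n'$ and $u_n''$. By the chain rule, writing $\eta_{n,N}(x)=\partial_x\overline{X}_{n,N}(x)$ and $\zeta_{n,N}(x)=\partial_x^2\overline{X}_{n,N}(x)$, we have $u_n'(x)=\varphi'(\overline{X}_{n,N}(x))\eta_{n,N}(x)$ and $u_n''(x)=\varphi''(\overline{X}_{n,N}(x))\eta_{n,N}(x)^2+\varphi'(\overline{X}_{n,N}(x))\zeta_{n,N}(x)$. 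So it suffices to bound $\sup_{n,x}|\eta_{n,N}(x)|$ and $\sup_{n,x}|\zeta_{n,N}(x)|$ by almost surely finite random variables uniformly over small $\Delta t$.

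The key step is to derive the discrete evolution equations for these derivatives by differentiating the recursion \eqref{eq:auxscheme_bis}. Differentiating once in $x$ gives $\eta_{n,k+1}(x)=\eta_{n,k}(x)\bigl(1+\overline{g}'(\overline{X}_{n,k}(x))\delta\beta_k^H\bigr)$ with $\eta_{n,n}=1$, and differentiating again yields a recursion for $\zeta_{n,k}$ that is linear in $\zeta_{n,k}$ with a source term quadratic in $\eta_{n,k}$ and involving $\overline{g}''$. The natural strategy is to view these as discretizations of linear Young differential equations driven by $\beta^H$: the processes $\eta$ and $\zeta$ solve linear recursions whose coefficients are bounded (since $\overline{g}',\overline{g}''$ are bounded by the $\mathcal{C}^3_b$ assumption on $g$). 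I would then invoke Lemma~\ref{lemma:Young} with $z=\beta^H$ to estimate the relevant Young-type sums, exploiting that $\alpha\in(1/2,H)$ can be chosen so that $\alpha+\alpha>1$, and that $\|\beta^H\|_\alpha\le C_\alpha$ is almost surely finite with finite moments by \eqref{eq:holder_betaH}. The aim is a Gronwall-type argument at the discrete level: control the H\"older-type seminorm $\|\eta_{n,\cdot}\|_{\alpha,\Delta t}$ on short subintervals where $\|\beta^H\|_\alpha(\Delta t\cdot\text{length})^\alpha$ is small, then iterate across $[0,T]$, obtaining a bound of the form $\exp(C_\alpha^{1/\alpha}T)$ that is independent of $\Delta t$.

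The main obstacle will be making the discrete Gronwall argument rigorous while keeping all constants independent of $\Delta t$. The difficulty is that the multiplicative factor $\bigl(1+\overline{g}'(\overline{X}_{n,k})\delta\beta_k^H\bigr)$ can individually exceed $1$, so naive multiplication of $N\sim T/\Delta t$ such factors could blow up; the resolution is to group increments over mesoscopic blocks and use the Young estimate of Lemma~\ref{lemma:Young} to show that over a block of fixed length $h$ the growth is controlled by $\exp(C\,C_\alpha h^\alpha)$ rather than by a product of $h/\Delta t$ factors. This is precisely where the explicit dependence on $\|y_m\|_{s,t,\alpha',\Delta t}$ in Lemma~\ref{lemma:Young} is needed, and where I expect the proof to require care: one must set up a self-improving estimate where the bound on $\|\eta\|_{\alpha,\Delta t}$ feeds back into itself through the Young integral term, close it on a block whose length is chosen small enough (depending on $C_\alpha$, hence random, which is why $\Delta t_0$ and $\Lambda$ are random), and then chain the finitely many blocks together. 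Once $\eta$ is controlled, the bound on $\zeta$ follows by treating its recursion as a linear equation with an already-controlled quadratic source and applying the same block-Gronwall machinery a second time.
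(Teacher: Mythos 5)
Your proposal follows essentially the same route as the paper's proof: the chain-rule reduction to the variation processes $\eta_{n,N}$ and $\zeta_{n,N}$, the Young-integral estimates via Lemma~\ref{lemma:Young} with the discrete H\"older seminorms $\|\cdot\|_{s,t,\alpha,\Delta t}$, and a self-improving block-Gronwall argument on random mesoscopic intervals of length $\tau$ chosen so that $C(g,\|\beta^H\|_\alpha)\tau^\alpha\le 1/2$, iterated over finitely many blocks and applied successively to $\overline{X}$, $\eta$ and $\zeta$. This matches the paper's Steps 1--3 and is correct.
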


The proof of Lemma~\ref{lemma:aux} is technical and is postponed to Section~\ref{sec:gen_lemma}. The arguments are similar to those used in~\cite{HuLiuNualart} to prove boundedness of the solutions of Euler or Milstein schemes applied to SDEs driven by fractional Brownian motion. Indeed, the proof consists in first expressing the first and second order derivatives of $u_n$ using solutions $\eta_{n,N}$ and $\zeta_{n,N}$ of variation equations (see equations~\eqref{eq:eta} and~\eqref{eq:zeta} below), second in proving appropriate bounds.

\begin{rem}
In the standard Brownian motion case (Section~\ref{sec:brown}), the role of $u_n$ would be played by a function defined as $\E[\varphi(\overline{X}_{n,N}(x))]$. The Markov property would play a key role in the analysis, and by homogeneity it would be sufficient to look at the properties of the mapping $\E[\varphi(\overline{X}_{0,n}(x))]$.

In the fractional Brownian motion case, the Markov property is not satisfied, and taking expectation is not relevant. Instead of the Markov property, the flow property~\eqref{eq:prop_u} is satisfied. Observe that the auxiliary functions $u_n$ need to be random. In addition, $u_n$ needs to be defined in terms of $\overline{X}_{n,N}$, instead of $\overline{X}_{0,N-n}$, since the process is not time homogeneous.

Even if the increments of the fractional Brownian motion $\bigl(\delta\beta_n^H\bigr)_{0\le n\le N-1}$ are not independent, the definition~\eqref{eq:u} of the random functions $u_n$ above makes sense. In the property~\eqref{eq:prop_u}, this lack of independence results in the following property: $u_{n+1}$ is not independent of $\delta\beta_n^H$.
\end{rem}

\subsection{Proof of Proposition~\ref{propo:main} in the general case}\label{sec:gen_proof}

The objective of this section is to provide the proof of Proposition~\ref{propo:main}. The guideline of the proof is similar to the approach in the simplified case, except for the first step:
\begin{itemize}
\item the error is written in terms of the functions $u_n$ defined by~\eqref{eq:u} with a telescoping sum argument
\item then the property~\eqref{eq:prop_u} and a second order Taylor expansion argument are used
\item the first order term vanishes by definition of the averaged coefficient $\overline{g}$
\item the second order term is handled using the boundedness of $u_n$, uniformly in $\Delta t$, obtained in Lemma~\ref{lemma:aux}, using the condition $H>1/2$ for the Hurst index.
\end{itemize}

\begin{proof}[Proof of Proposition~\ref{propo:main} in the general case]

Let $\varphi$ be of class $\mathcal{C}_b^3$, and let $u_n$ be defined by~\eqref{eq:u}, for $n=0,\ldots,N$. Recall that $\E^{H}[\cdot]$ denotes the conditional expectation with respect to the $\sigma$-field $\mathcal{F}^H$ generated by the fractional Brownian motion $\beta^H$. On the one hand, by definition of $u_0$ one has $\varphi(\overline{X}_N)=u_0(x_0)=\E^H[u_0(x_0)]$, where $x_0=\overline{X}_0=X_0^0$ ($u_0$ is a $\mathcal{F}^H$-measurable random variable). On the other hand, one has $\varphi(X_N^0)=u_N(X_N^0)$. Using a telescoping sum argument, one then obtains
\begin{align*}
\E^H[\varphi(X_N^0)]-\varphi(\overline{X}_N)&=\E^{H}[u_N(X_N^0)]-\E^{H}[u_0(X_0^0)]\\
&=\sum_{n=0}^{N-1}\bigl(\E^{H}[u_{n+1}(X_{n+1}^0)]-\E^{H}[u_n(X_n^0)]\bigr)\\
&=\sum_{n=0}^{N-1}\E^{H}[u_{n+1}\bigl(X_n^0+g(X_n^0,\gamma_n)\delta\beta_n^H\bigr)-u_{n+1}\bigl(X_n^0+\overline{g}(X_n^0)\delta\beta_n^H\bigr)\bigr],
\end{align*}
where in the last line we have used the definition~\eqref{eq:limitscheme} of the limiting scheme, and the property~\eqref{eq:prop_u} of the functions $u_n$.

A Taylor expansion argument then gives, for all $n\in\{0,\ldots,N-1\}$
\begin{align*}
\E^{H}\bigl[u_{n+1}\bigl(X_n^0+g(X_n^0,\gamma_n)\delta\beta_n^H\bigr)&-u_{n+1}\bigl(X_n^0+\overline{g}(X_n^0)\delta\beta_n^H\bigr)\bigr]\\
&=\E^{H}[u_{n+1}'(X_n^0)\bigl(g(X_n^0,\gamma_n)-\overline{g}(X_n^0)\bigr)]\delta\beta_n^H+R_n,
\end{align*}
with $|R_n|\le C\sup|u_{n+1}''(\cdot)|(\delta\beta_n^H)^2$, since $g$ is assumed to be bounded.

Assume that $\Delta t$ satisfies the condition $\Delta t\le \Delta t_0$ where the random variable $\Delta t_0$ is given in Lemma~\ref{lemma:aux}. On the one hand, using the H\"older continuity property~\eqref{eq:holder_betaH} of the fractional Brownian motion, with $\alpha\in(\frac12,H)$ and the bound for the second order derivative of $u_n$ stated in Lemma~\ref{lemma:aux}, there exists an almost surely finite constant ${\bf\Lambda}$, which does not depend on $\Delta t$, such that
\[
|R_n|\le {\bf \Lambda}\Delta t^{2\alpha} 
\]
for all $n\in\{0,\ldots,N-1\}$, if $\Delta t\le \Delta t_0$.

On the other hand, using the definition of $\overline{g}$ and conditioning with respect to $X_n^0$, one obtains the key property
\[
\E^{H}[u_{n+1}'(X_n^0)\bigl(g(X_n^0,\gamma_n)-\overline{g}(X_n^0)\bigr)]=0,
\]
which means that the first-order term vanishes for all $n\in\{0,\ldots,N-1\}$.

Finally, one obtains
\[
\big|\E^H[\varphi(X_N^0)]-\varphi(\overline{X}_N)\big|\le \sum_{n=0}^{N-1}|R_n|\le {\bf\Lambda}T\Delta t^{2\alpha-1}\underset{\Delta t\to 0}\to 0,
\]
almost surely, since $\alpha$ is chosen such that $2\alpha>1$.

Since $\varphi$ is assumed to be bounded, by the dominated convergence theorem the almost sure convergence implies
\[
\E\bigl[\big|\E^H[\varphi(X_N^0)]-\varphi(\overline{X}_N)\big|\bigr]\underset{\Delta t\to 0}\to 0.
\]
This holds for all functions $\varphi$ of class $\mathcal{C}_b^3$.

This concludes the proof of Proposition~\ref{propo:main} in the general case.
\end{proof}

\begin{rem}
The proof above also handles the simplified case treated in Section~\ref{sec:simple}, with a slightly different point of view. In that case $u_n(x)=\varphi(x+\overline{g}\delta\beta_n^H)$, and the proof of Lemma~\ref{lemma:aux} is straightforward in this simplified case.
\end{rem}

\subsection{Proof of Lemma~\ref{lemma:aux}}\label{sec:gen_lemma}

It remains to give the proof of Lemma~\ref{lemma:aux}, which first requires to introduce additional notation. By the definition~\eqref{eq:u} of the functions $u_n$, one has the following expressions for $u_n'(x)$ and $u_n''(x)$:
\begin{equation}\label{eq:deriv_u}
\begin{aligned}
u_n'(x)&=\varphi'(\overline{X}_{n,N}(x))\eta_{n,N}(x)\\
u_n''(x)&=\varphi'(\overline{X}_{n,N}(x))\zeta_{n,N}(x)+\varphi''(x)(\overline{X}_{n,N}(x))(\eta_{n,N}(x))^2,
\end{aligned}
\end{equation}
where for all $n\in\{0,\ldots,N-1\}$, $k\in\{n,\ldots,N-1\}$ and $x\in\R$, one has
\begin{equation}\label{eq:eta}
\eta_{n,k+1}(x)=\eta_{n,k}(x)+\overline{g}'(\overline{X}_{n,N}(x))\eta_{n,k}(x)\delta\beta_k^H
\end{equation}
and
\begin{equation}\label{eq:zeta}
\zeta_{n,k+1}(x)=\zeta_{n,k}(x)+\overline{g}'(\overline{X}_{n,N}(x))\zeta_{n,k}(x)\delta\beta_k^H+\overline{g}''(\overline{X}_{n,N}(x))(\eta_{n,k}(x))^2 \delta\beta_k^H
\end{equation}
with initial conditions $\eta_{n,n}(x)=1$ and $\zeta_{n,n}(x)=0$.

The expressions in~\eqref{eq:deriv_u} are obtained by recursion arguments. In the sequel, to simplify notation, we let $\overline{X}_{n,k}=\overline{X}_{n,k}(x)$, $\eta_{n,k}=\eta_{n,k}(x)$ and $\zeta_{n,k}=\zeta_{n,k}(x)$.

Let us introduce auxiliary continuous-time processes $\tilde{X}_n,\tilde{\eta}_n$ and $\tilde{\zeta}_n$, defined on the interval $[t_n,T]$ (with $t_n=n\Delta t$), for all $n\in\{0,\ldots,N-1\}$: for all $k\in\{n,\ldots,N-1\}$, if $t\in[t_k,t_{k+1})$, then
\begin{align*}
\tilde{X}_n(t)&=\overline{X}_{n,k}+\overline{g}(\overline{X}_{n,k}(\beta^H(t)-\beta_H(t_k))\\
\tilde{\eta}_n(t)&=\eta_{n,k}+\overline{g}'(\overline{X}_{n,k})\eta_{n,k}(\beta^H(t)-\beta^H(t_k))\\
\tilde{\zeta}_n(t)&=\zeta_{n,k}+\overline{g}'(\overline{X}_{n,k})\zeta_{n,k}(\beta^H(t)-\beta^H(t_k))+\overline{g}''(\overline{X}_{n,k})(\eta_{n,k})^2(\beta^H(t)-\beta^H(t_k)).
\end{align*}
One has $\tilde{X}_n(t_k)=\overline{X}_{n,k}$, $\tilde{\eta}_n(t_k)=\eta_{n,k}$ and $\tilde{\zeta}_n(t_k)=\zeta_{n,k}$ for all $n\le k\le N$.

We are now in position to prove Lemma~\ref{lemma:aux}.
\begin{proof}[Proof of Lemma~\ref{lemma:aux}]
The proof is divided into three steps, where bounds for $\tilde{X}_n$, $\tilde{\eta}_n$ and $\tilde{\zeta}_n$ are proved successively.

{\bf Step 1.}
The auxiliary process $\tilde{X}_n$ satisfies the following property: for all $t_n\le s\le t\le T$, such that $s=\ell(s)$, one has
\[
\tilde{X}_n(t)-\tilde{X}_n(s)=\int_{s}^{t}F_0(\tilde{X}_n(\ell(r)))d\beta^H(r),
\]
where $F_0=\overline{g}$.

Using Lemma~\ref{lemma:Young} with $\alpha=\alpha'\in(\frac12,H)$, and using the boundedness of 
$\overline{g}$ and $\overline{g}'$, one obtains the inequality
\[
\|\tilde{X}_n\|_{s,t,\alpha,\Delta t}\le C_0(g,\|\beta^H\|_\alpha)\Bigl(1+(t-s)^\alpha \|\tilde{X}_n\|_{s,t,\alpha,\Delta t}\Bigr).
\]

Let $\tau_0$ be a positive random variable, chosen such that
\[
C_0(g,\|\beta^H\|_\alpha)\tau_0^\alpha\le \frac12,
\]
and $\tau_0\le T$.

Then if $s=\ell(s)\le t$ satisfy $t-s\le \tau_0$, one obtains
\[
\|\tilde{X}_n\|_{s,t,\alpha,\Delta t}\le 2C_0(g,\|\beta^H\|_\alpha),
\]
which gives
\[
|\tilde{X}_n(t)|\le |\tilde{X}_n(s)|+2\tau_0^\alpha C_0(g,\|\beta^H\|_\alpha).
\]
If the time step size $\Delta t$ satisfies $\Delta t\le\tau_0$, one thus obtains
\[
|\overline{X}_{n,k_2}|\le |\overline{X}_{n,k_1}|+2\tau_0^\alpha C_0(g,\|\beta^H\|_\alpha)
\]
for integers $k_1<k_2$ such that $(k_2-k_1)\Delta t\le \tau_0$, and iterating the argument and using the condition $\overline{X}_{n,n}=x$, one obtains
\[
|\overline{X}_{n,k}|\le |x|+2N_0\tau_0^\alpha C_0(g,\|\beta^H\|_\alpha)
\]
where the integer $N_0$ is chosen such that $N_0\tau_0\ge T$, for all $k\in\{n,\ldots,N\}$, if $\Delta t\le \tau_0$.

{\bf Step 2.}
The auxiliary process $\tilde{\eta}_n$ satisfies the following property: for all $t_n\le s\le t\le T$, such that $s=\ell(s)$, one has
\[
\tilde{\eta}_n(t)-\tilde{\eta}_n(s)=\int_{s}^{t}F_1(\tilde{X}_n(\ell(r)),\tilde{\eta}_n(\ell(r)))d\beta^H(r),
\]
where $F_1(x,\eta)=\overline{g}'(x)\eta$. Note that $|F_1(x,\eta)|+|\partial_x F_1(x,\eta)|\le C|\eta|$ and $|\partial_\eta F_1(x,\eta)|\le C$.

Using Lemma~\ref{lemma:Young} with $\alpha=\alpha'\in(\frac12,H)$, and using the boundedness of 
$\overline{g}$, $\overline{g}'$ and $\overline{g}''$, one obtains the inequality
\begin{align*}
\|\tilde{\eta}_n\|_{s,t,\alpha,\Delta t}&\le C_1(g)\|\beta^H\|_\alpha \|\tilde{\eta}_n\|_{s,t,\infty}\\
&+C_1(g)\|\beta^H\|_{\alpha}(t-s)^\alpha\Bigl(\|\tilde{\eta}_n\|_{s,t,\infty}\|\tilde{X}_n\|_{s,t,\alpha,\Delta t}+\|\tilde{\eta}_n\|_{s,t,\alpha,\Delta t}\Bigr),
\end{align*}
where $\|\tilde{\eta}_n\|_{s,t,\infty}=\underset{r\in[s,t]}\sup~|\tilde{\eta}_n(r)|$.

Owing to Step 1, if $t-s\le \tau_0$, one has $\|\tilde{X}_n\|_{s,t,\alpha,\Delta t}\le 2C_0(g,\|\beta^H\|_\alpha)$. In addition, one has $\|\tilde{\eta}_n\|_{s,t,\infty}\le |\tilde{\eta}_n(s)|+(t-s)^\alpha\|\eta\|_{s,t,\alpha,\Delta t}$.

One thus obtains an inequality of the type
\[
\|\tilde{\eta}_n\|_{s,t,\alpha,\Delta t}\le C_1(g,\|\beta\|_{\alpha})\Bigl(|\tilde{\eta}_n(s)|+(t-s)^\alpha\|\tilde{\eta}_n\|_{s,t,\alpha,\Delta t}\Bigr)
\]
if $t\ge s=\ell(s)$ and $t-s\le \tau_0$. Let $\tau_1$ be a positive random variable, chosen such that
\[
C_1(g,\|\beta\|_\alpha)\tau_1^\alpha\le \frac12
\]
and $\tau_1\le \tau_0$.

Then if $s=\ell(s)\le t$ satisfy $t-s\le \tau_1$, one obtains
\[
\|\tilde{\eta}_n\|_{s,t,\alpha,\Delta t}\le 2C_1(g,\|\beta\|_{\alpha})|\tilde{\eta}_n(s)|,
\]
which gives
\[
|\tilde{\eta}_n(t)|\le \bigl(1+2C_1(g,\|\beta\|_{\alpha})\tau_1^\alpha\bigr)|\tilde{\eta}_n(s)|.
\]
If the time step size satisfies $\Delta t\le \tau_1$, one thus obtains
\[
|\eta_{n,k_2}|\le \bigl(1+2C_1(g,\|\beta\|_{\alpha})\tau_1^\alpha\bigr)|\eta_{n,k_1}|
\]
for integers $k_1<k_2$ such that $(k_2-k_1)\Delta t\le \tau_1$, and iterating the argument and using the condition $\eta_{n,n}=1$ one obtains
\[
|\eta_{n,k}|\le \bigl(1+2C_1(g,\|\beta\|_{\alpha})\tau_1^\alpha\bigr)^{N_1}
\]
where the integer $N_1$ is chosen such that $N_1\tau_1\ge T$, for all $k\in\{n,\ldots,N\}$, if $\Delta t\le \tau_1$. This implies the uniform bound
\begin{equation}\label{eq:boundeta}
\|\tilde{\eta}_n\|_{t_n,T,\infty}\le C_1'(g,\|\beta^H\|_\alpha),
\end{equation}
which holds for all $\Delta t\le \tau_1$ and all $n\in\{0,\ldots,N\}$.

{\bf Step 3.}
The auxiliary process $\tilde{\zeta}_n$ satisfies the following property: for all $t_n\le s\le t\le T$, such that $s=\ell(s)$, one has
\[
\tilde{\zeta}_n(t)-\tilde{\zeta}_n(s)=\int_{s}^{t}F_2(\tilde{X}_n(\ell(r)),\tilde{\eta}_n(\ell(r)),\tilde{\eta}_n(\ell(r)))d\beta^H(r),
\]
where $F_2(x,\eta,\zeta)=\overline{g}'(x)\zeta+\overline{g}''(x)\eta^2$. Note that $|F_2(x,\eta,\zeta)|+|\partial_xF_2(x,\eta,\zeta)|\le C|\zeta|+C|\eta|^2$, $|\partial_\eta F_2(x,\eta,\zeta)|\le C|\eta|$ and $|\partial_\zeta F_2(x,\eta,\zeta)|\le C$.

Using Lemma~\ref{lemma:Young} with $\alpha=\alpha'\in(\frac12,H)$, and using the boundedness of 
$\overline{g}$, $\overline{g}'$, $\overline{g}''$ and $\overline{g}^{(3)}$, one obtains the inequality
\begin{align*}
\|&\tilde{\zeta}_n\|_{s,t,\alpha,\Delta t}\le C_1(g)\|\beta^H\|_\alpha (\|\tilde{\zeta}_n\|_{s,t,\infty}+\|\tilde{\eta}_n\|_{s,t,\infty}^2)\\
&+C_2(g)\|\beta^H\|_{\alpha}(t-s)^\alpha\Bigl(\bigl(\|\tilde{\zeta}_n\|_{s,t,\infty}+\|\tilde{\eta}_n\|_{s,t,\infty}^2\bigr)\|\tilde{X}_n\|_{s,t,\alpha,\Delta t}+\|\tilde{\eta}_n\|_{s,t,\infty}\|\tilde{\eta}_n\|_{s,t,\alpha,\Delta t}+\|\tilde{\zeta}_n\|_{s,t,\alpha,\Delta t}\Bigr),
\end{align*}
where $\|\tilde{\eta}_n\|_{s,t,\infty}=\underset{r\in[s,t]}\sup~|\tilde{\eta}_n(r)|$ and $\|\tilde{\zeta}_n\|_{s,t,\infty}=\underset{r\in[s,t]}\sup~|\tilde{\zeta}_n(r)|$.

Owing to Steps 1 and 2, if $t-s\le \tau_1$, one has $\|\tilde{X}_n\|_{s,t,\alpha,\Delta t}\le C_0'(g,\|\beta^H\|_\alpha)$ and $\|\tilde{\eta}_n\|_{s,t,\infty}\le C_1'(g,\|\beta^H\|_\alpha)$, and one has the uniform bound $\|\tilde{\eta}_n\|_{t_n,T,\infty}\le C_1'(g,\beta^H\|_\alpha)$ if $\Delta t\le \tau_1$. In addition, one has $\|\tilde{\zeta}_n\|_{s,t,\infty}\le |\tilde{\zeta}_n(s)|+(t-s)^\alpha\|\zeta\|_{s,t,\alpha,\Delta t}$.

One thus obtains an inequality of the type
\[
\|\tilde{\zeta}_n\|_{s,t,\alpha,\Delta t}\le C_2(g,\|\beta\|_{\alpha})\Bigl(1+|\tilde{\zeta}_n(s)|+(t-s)^\alpha\|\tilde{\zeta}_n\|_{s,t,\alpha,\Delta t}\Bigr)
\]
if $t\ge s=\ell(s)$ and $t-s\le \tau_0$. Let $\tau_2$ be a positive random variable, chosen such that
\[
C_2(g,\|\beta\|_\alpha)\tau_2^\alpha\le \frac12
\]
and $\tau_2\le \tau_1$.

Then if $s=\ell(s)\le t$ satisfy $t-s\le \tau_1$, one obtains
\[
\|\tilde{\zeta}_n\|_{s,t,\alpha,\Delta t}\le 2C_2(g,\|\beta\|_{\alpha})(1+|\tilde{\zeta}_n(s)|),
\]
which gives
\[
|\tilde{\zeta}_n(t)|\le \bigl(1+2C_2(g,\|\beta\|_{\alpha})\tau_2^\alpha\bigr)|\tilde{\zeta}_n(s)|+2C_2(g,\|\beta\|_{\alpha})\tau_2^\alpha.
\]
If the time step size satisfies $\Delta t\le \tau_2$, one thus obtains
\[
|\zeta_{n,k_2}|\le \bigl(1+2C_2(g,\|\beta\|_{\alpha})\tau_2^\alpha\bigr)|\zeta_{n,k_1}|+2C_2(g,\|\beta\|_{\alpha})\tau_2^\alpha
\]
for integers $k_1<k_2$ such that $(k_2-k_1)\Delta t\le \tau_1$, and iterating the argument and using the condition $\zeta_{n,n}=0$ one obtains
\[
|\zeta_{n,k}|\le \bigl(1+2C_2(g,\|\beta\|_{\alpha})\tau_2^\alpha\bigr)^{N_2}
\]
where the integer $N_2$ is chosen such that $N_2\tau_2\ge T$, for all $k\in\{n,\ldots,N\}$, if $\Delta t\le \tau_2$. This implies the uniform bound
\begin{equation}\label{eq:boundzeta}
\|\tilde{\zeta}_n\|_{t_n,T,\infty}\le C_2'(g,\|\beta^H\|_\alpha),
\end{equation}
which holds for all $\Delta t\le \tau_1$ and all $n\in\{0,\ldots,N\}$.

{\bf Conclusion}

Owing to the expressions~\eqref{eq:deriv_u} for $u_n'(x)$ and $u_n''(x)$, and to the bounds~\eqref{eq:boundeta} and~\eqref{eq:boundzeta} for $\eta_{n,N}=\tilde{\eta}_n(T)$ and $\zeta_{n,N}=\tilde{\zeta}_n(T)$, one obtains
\[
\underset{x\in\R}\sup~\bigl(|u_n(x)|+|u_n'(x)|+|u_n''(x)|\bigr)\le \Lambda
\]
for all $n\in\{0,\ldots,N\}$, where $\Lambda$ is an almost surely finite random variable, if $\Delta t\le \Delta t_0=\tau_2$ where $\tau_2$ is the positive random variable constructed in Step 3 above.

This concludes the proof of~\eqref{eq:lemmaaux}.

\end{proof}

\section{Discussion}\label{sec:conclusion}

In this article, we have studied a class of Euler schemes~\eqref{eq:schemeintro} for slow-fast systems of stochastic differential equations~\eqref{eq:SDEintro}. The slow component $X^\epsilon$ is driven by a fractional Brownian motion with Hurst index $H>1/2$, and converges in probability to a process $\overline{X}$, owing to the averaging principle recently proved in~\cite{HairerLi}. We have proved that well-chosen numerical schemes are able to reproduce a version of the averaging principle at the discrete-time level: they satisfy the asymptotic preserving property stated in Theorem~\ref{th:main}. In particular, the time-step size $\Delta t$ can be chosen independently of the time scale separation parameter $\epsilon$. We have illustrated the differences with the standard Brownian motion case treated in the recent work~\cite{BRR}.

We have left open the important question of proving error estimates: is it possible to prove a uniform accuracy property (as in~\cite{BRR} in the standard Brownian case), {\it i.e.} an error estimate depending on $\Delta t$, uniform with respect to $\epsilon$? Studying this question may require more involved techniques (such as the ones employed in~\cite{HuLiuNualart} and references therein) than those used in this manuscript. More precisely, it is challenging to prove error estimates for
\[
\E\bigl[\big|\E^{H}[\varphi(X_N^\epsilon)]-\E ^{H}[\varphi(\overline{X}_N^0)]\big|\bigr]
\]
when $\epsilon\to 0$, for fixed $\Delta t>0$, which are uniform with respect to $\Delta t$.

Note that obtaining nice error estimates when $\Delta t\to 0$ and/or $\epsilon\to 0$ may provide an alternative proof of the averaging principle from~\cite{HairerLi} by a temporal discretization technique (similar to the one proposed in the seminal article~\cite{Hasminskii} in the standard Brownian motion case). In this article, we have assumed that the Hurst index satisfies $H>1/2$. To the best of our knowledge, the validity and the expression of the averaging principle for SDEs driven by a fractional Brownian motion with Hurst index $H<1/2$ is not known. The construction of well-chosen numerical schemes, with associated nice error estimates, may provide a strategy to generalize the averaging principle to the case $H<1/2$. The scheme would then be asymptotic preserving -- but stating this property needs to identify the limit at the theoretical level. We leave the challenging question of the generalization for $H<1/2$ for future works.

As mentioned at the beginning of Section~\ref{sec:setting}, it is straightforward to generalize the results of this manuscript to higher dimensional situation. It would also be straightforward to consider systems of the type
\[
\left\lbrace
\begin{aligned}
dX^\epsilon(t)&=f(X^\epsilon,m^\epsilon)dt+\sum_{j=1}^{m}g_j(X^\epsilon(t),m^\epsilon(t))d\beta_j^H(t),\\
dm^\epsilon(t)&=-\frac{1}{\epsilon}m^\epsilon(t)dt+\frac{\sqrt{2}}{\sqrt{\epsilon}}\sigma(X^\epsilon(t))dB(t),
\end{aligned}
\right.
\]
where $X^\epsilon(t)\in\R^d$, $\sigma_j:\R^d\times\R\to\R^d$, and $\beta_j^H$ are independent fractional Brownian motions. In particular the treatment of the drift term $f$ has been performed in~\cite{BRR}, this is why assuming that $f(x,m)=0$ in the present article is legitimate, in order to focus on the main features due to the fractional Brownian motion. Assuming that $\sigma(x)=1$ also simplifies the presentation, however it is straightforward to check that the proof of Proposition~\ref{propo:main} remains unchanged (up to modifying notation), thus Theorem~\ref{th:main} also holds in this case.

Instead of using the standard Euler scheme~\eqref{eq:schemeintro} to discretize the slow component $X^\epsilon$ of~\eqref{eq:SDEintro}, in order to increase the performance it would be tempting to employ the scheme studied in~\cite{HuLiuNualart}, of the type
\[
X_{n+1}^\epsilon=X_n^\epsilon+g(X_n^\epsilon,m_{n+1}^\epsilon)\bigl(\beta^H(t_{n+1})-\beta^H(t_n)\bigr)+\frac12\bigl(\nabla_x gg\bigr)(X_n^\epsilon,m_{n+1}^\epsilon)\Delta t^{2H}.
\]
The definition of $m_n^\epsilon$ is not modified from~\eqref{eq:schemeintro}. When $\epsilon\to 0$, one obtains the limiting scheme
\[
X_{n+1}^0=X_n^0+g(X_n^0,\gamma_n)\bigl(\beta^H(t_{n+1})-\beta^H(t_n)\bigr)+\frac12\bigl(\nabla_x g g\bigr)(X_n^\epsilon,\gamma_n)\Delta t^{2H}.
\]
This scheme is consistent (when $\Delta t\to 0$) with the solution of the averaged equation, however it is not clear that the good performance of the modified scheme is preserved when $\epsilon=0$: we have $\E[g(\cdot,\gamma_n)]=\overline{g}(\cdot)$, however in general one may have $\E[\bigl(\nabla_x g g\bigr)(\cdot,\gamma_n)]\neq \bigl(\nabla_x \overline{g} \overline{g}\bigr)(\cdot)$ -- however observe that the equality holds if $d=1$. The construction of more efficient methods than the standard Euler scheme when both $\epsilon>0$ and $\epsilon=0$ may be investigated in future works.

\section*{Acknowledgments}

The author would like to thank Martin Hairer for discussions when this work was initiated, and Shmuel Rakotonirina-Ricquebourg for discussions concerning the proof of Lemma~\ref{lemma:cv_proba}.

\begin{appendix}
\section{Proof of Lemma~\ref{lemma:cv_proba}}

\begin{proof}
{\it Proof that (i)$\Rightarrow$(ii).} Assume that $X_N\to X$ in probability. For any function $\varphi$ of class $\mathcal{C}_b^K$, $\varphi$ is bounded and Lipschitz continuous, thus there exists $C(\varphi)\in(0,\infty)$ such that
\begin{align*}
\E\bigl[\big|\E[\varphi(X_N)|\mathcal{G}]-\varphi(X)\big|\bigr]&\le \E\bigl[\big|\varphi(X_N)-\varphi(X)\big|\bigr]\\
&\le C(\varphi)\E\bigl[\min(|X_N-X|,1)\bigr]\underset{N\to\infty}\to 0,
\end{align*}
where the last step is a consequence of convergence in probability. As a consequence (i) implies (ii).

{\it Proof that (ii)$\Rightarrow$(i).} Introduce an auxiliary function $\varphi:\R\to\R$ be such that
\begin{itemize}
\item $\varphi$ is of class $\mathcal{C}^\infty$
\item $\varphi(x)=0$ if $|x|\le \frac12$
\item $\varphi(x)=0$ if $|x|\ge 1$
\item $0\le \varphi(x)\le 1$ if $\frac12\le |x|\le 1$.
\end{itemize} 

For every $k\in\mathbb{Z}$ and $\eta\in(0,1)$, introduce the interval $I_{k,\eta}=[\frac{k\eta}{2},\frac{(k+1)\eta}{2})$, $m_{k,\eta}=\frac{2k\eta+1}{4}$ and the function
\[
\varphi_{k,\eta}=\varphi\bigl(\eta^{-1}(\cdot-m_{k,\eta})\bigr).
\]
Then $\varphi_{k,\eta}$ is of class $\mathcal{C}_b^K$ (where $K$ is an arbitrary integer).

For all $\eta\in(0,1)$, one has
\[
\mathbb{P}(|X_N-X|>\eta)=\sum_{k\in\mathbb{Z}}\E\bigl[\mathds{1}_{|X_N-X|>\eta}\mathds{1}_{X\in I_{k,\eta}}\bigr].
\]
Owing to the dominated convergence theorem -- one has $\E\bigl[\mathds{1}_{|X_N-X|>\eta}\mathds{1}_{X\in I_{k,\eta}}\bigr]\le\mathbb{P}(X\in I_{k,\eta})$ with $\sum_{k\in\mathbb{Z}}\mathbb{P}(X\in I_{k,\eta})=1$ -- it suffices to prove that for all $k\in\mathbb{Z}$ one has
\[
\E\bigl[\mathds{1}_{|X_N-X|>\eta}\mathds{1}_{X\in I_{k,\eta}}\bigr]\underset{N\to\infty}\to 0.
\]
Note that combining the conditions $|X_N-X|>\eta$ and $X\in I_{k,\eta}$ implies that $\varphi_{k,\eta}(X_N)=1$ and $\varphi_{k,\eta}(X)=0$. Using the fact that $X$ is $\mathcal{G}$-measurable, one thus obtains
\begin{align*}
\E\bigl[\mathds{1}_{|X_N-X|>\eta}\mathds{1}_{X\in I_{k,\eta}}\bigr]&\le \E\bigl[\varphi_{k,\eta}(X_N)\mathds{1}_{X\in I_{k,\eta}}\bigr]\\
&\le \E\bigl[\E[\varphi_{k,\eta}(X_N)|\mathcal{G}]\mathds{1}_{X\in I_{k,\eta}}\bigr]\\
&\le \E\bigl[\bigl(\E[\varphi_{k,\eta}(X_N)|\mathcal{G}]-\varphi_{k,\eta}(X)\bigr)\mathds{1}_{X\in I_{k,\eta}}\bigr]\\
&\le \E\bigl[\big|\E[\varphi_{k,\eta}(X_N)|\mathcal{G}]-\varphi_{k,\eta}(X)\bigr|\bigr]\underset{n\to\infty}\to 0,
\end{align*}
using the assumption that (ii) holds, with $\varphi=\varphi_{k,\eta}$.

Applying the dominated convergence theorem then gives
\[
\mathbb{P}(|X_N-X|>\eta)\underset{N\to\infty}\to 0,
\]
for all $\eta\in(0,1)$. As a consequence $X_N$ converges to $X$ in probability when $N\to\infty$, and (ii) implies (i).

This concludes the proof.
\end{proof}

\end{appendix}


\begin{thebibliography}{10}

\bibitem{BRR}
C.-E. Br{\'e}hier and S.~Rakotonirina-Ricquebourg.
\newblock On asymptotic preserving schemes for a class of stochastic
  differential equations in averaging and diffusion approximation regimes.
\newblock {\em Preprint}, 2020.

\bibitem{GehringerLi}
J.~Gehringer and X.-M. Li.
\newblock Rough homogenisation with fractional dynamics.
\newblock {\em Preprint}, 2020.

\bibitem{HairerLi}
M.~Hairer and X.-M. Li.
\newblock Averaging dynamics driven by fractional {B}rownian motion.
\newblock {\em Ann. Probab.}, 48(4):1826--1860, 2020.

\bibitem{Hasminskii}
R.~Z. Hasminskii.
\newblock On the principle of averaging the {I}t\^{o}'s stochastic differential
  equations.
\newblock {\em Kybernetika (Prague)}, 4:260--279, 1968.

\bibitem{HongHuangWang}
J.~Hong, C.~Huang, and X.~Wang.
\newblock Strong convergence rate of {R}unge--{K}utta methods and simplified
  step-{N} {E}uler schemes for {S}{D}{E}s driven by fractional {B}rownian
  motions.
\newblock {\em Preprint}, 2017.

\bibitem{HuLiuNualart}
Y.~Hu, Y.~Liu, and D.~Nualart.
\newblock Rate of convergence and asymptotic error distribution of {E}uler
  approximation schemes for fractional diffusions.
\newblock {\em Ann. Appl. Probab.}, 26(2):1147--1207, 2016.

\bibitem{LiSieber}
X.-M. Li and J.~Sieber.
\newblock Slow-fast systems with fractional environment and dynamics.
\newblock {\em Preprint}, 2020.

\bibitem{MandelbrotVanNess}
B.~B. Mandelbrot and J.~W. Van~Ness.
\newblock Fractional {B}rownian motions, fractional noises and applications.
\newblock {\em SIAM Rev.}, 10:422--437, 1968.

\bibitem{MishuraShevchenko}
Y.~Mishura and G.~Shevchenko.
\newblock The rate of convergence for {E}uler approximations of solutions of
  stochastic differential equations driven by fractional {B}rownian motion.
\newblock {\em Stochastics}, 80(5):489--511, 2008.

\bibitem{Nualart}
D.~Nualart.
\newblock {\em The {M}alliavin calculus and related topics}.
\newblock Probability and its Applications (New York). Springer-Verlag, Berlin,
  second edition, 2006.

\bibitem{NualartRascanu}
D.~Nualart and A.~R\u{a}\c{s}canu.
\newblock Differential equations driven by fractional {B}rownian motion.
\newblock {\em Collect. Math.}, 53(1):55--81, 2002.

\bibitem{PavliotisStuart}
G.~A. Pavliotis and A.~M. Stuart.
\newblock {\em Multiscale methods}, volume~53 of {\em Texts in Applied
  Mathematics}.
\newblock Springer, New York, 2008.
\newblock Averaging and homogenization.

\bibitem{Veretennikov}
A.~Y. Veretennikov.
\newblock On an averaging principle for systems of stochastic differential
  equations.
\newblock {\em Mat. Sb.}, 181(2):256--268, 1990.

\end{thebibliography}

\end{document}